\def\ni{\noindent}
\def\beq{\arraycolsep1pt\begin{eqnarray*}}
\def\eeq{\end{eqnarray*}}
\newcommand{\Z}{{\mathbb Z}}
\newcommand{\R}{{\mathbb R}}
\newcommand{\half}{{\textstyle{\frac{1}{2}}}}
\newcommand\BB{\mathcal{B}}
\newcommand\ov{\overline}
\newtheorem{Thm}{Theorem}[section]
\newtheorem{lem}[Thm]{Lemma}
\newtheorem{pro}[Thm]{Proposition}
\def\bea{\begin{eqnarray}}
\def\eea{\end{eqnarray}}
\numberwithin{equation}{section}
\date{}
\title{A multiplicity result for Hamiltonian systems with mixed periodic-type and Neumann-type\\ boundary conditions }
\author{Wahid Ullah}
\begin{document}

\maketitle

\begin{abstract}
We investigate the multiplicity of solutions for a Hamiltonian system coupling two systems associated with mixed boundary conditions. Corresponding to the first system, we impose periodic boundary conditions and assume the twist assumption commonly used for the Poincar\'e--Birkhoff theorem, while for the second one, we consider a two-point boundary conditions of Neumann type.
\end{abstract}

\section{Introduction and statement of the main result}

The classical Poincar\'e--Birkhoff fixed point theorem, also known as Poincar\'e’s last geometric theorem, states that any area-preserving homeomorphism of a planar annulus that keeps both boundary circles invariant and twisting them in opposite directions must have at least two fixed points. Poincar\'e~\cite{poincare:1912} conjectured this theorem shortly before his death in 1912, and it was later proved by Birkhoff \cite{Bir1913TAMS, Bir1925AM}. Poincar\'e and Birkhoff were motivated by its applications to the search of periodic solutions of conservative dynamical systems.

\medbreak

Arnold sought to extend the Poincar\'e--Birkhoff Theorem to higher dimensions, emphasizing its significance for understanding periodic solutions in systems with many degrees of freedom~\cite[page 416]{Arn1978SB}. Birkhoff himself had already recognized the importance of this issue, describing it as an outstanding question~\cite[page 299]{Bir1925AM}. He \cite{Bir1931Paris} proposed a $2N$-dimensional version of the theorem, which involved the key assumption that there exists a manifold diffeomorphic to the $N$-torus, where the exact symplectic map preserves the first $N$ coordinates.

\medbreak
In 1978, Rabinowitz~\cite{Rab1978CPAM} demonstrated how the periodic problem for a Hamiltonian system can be approached through a variational method. A significant challenge in this approach is that the associated functional is highly indefinite. On the other hand, Conley and Zehnder~\cite[Theorem 3]{ConZeh1983IM} proved another version of the Poincar\'e--Birkhoff Theorem in higher dimensions by using variational methods and the Conley index theory for flows. Their result concerns the multiplicity of periodic solutions for time-dependent Hamiltonian vector fields provided that the Hamiltonian function $H=H(t,x,y)$ with $x=(x_1, \ldots, x_N)$, $y=(y_1, \ldots, y_N)$ is twice continuously differentiable, periodic in $t$ and in the variables $x_i$, and quadratic in $y$ on a neighborhood of infinity. Then, they obtained the existence of at least $N+1$ periodic solutions. Remarkably, their result does not rely on the Poincar\'e time-map being close to the identity or having a monotone twist.

\medbreak

Inspired by the idea of Conley and Zehnder, Chang~\cite{Cha1989NA} provided an alternative proof of their result, highlighting that the periodicity of the Hamiltonian function $H$ is fundamental to the existence of multiple periodic solutions. The advancement of infinite-dimensional Lusternik--Schnirelmann methods enabled Szulkin~\cite{Szu1990NA} to extend the Conley--Zehnder theorem, making it applicable to a broader range of Hamiltonian systems. Some other results concerning periodic solutions of Hamiltonian systems can be found in~\cite{Fel1992JDE, Jos1994PLMS}.

\medbreak

\medbreak

Taking a further step in this direction Fonda and Ure\~na~\cite{FonUre2016CRASP, FonUre2017APAN} proposed a new version of higher-dimensional Poincar\'e--Birkhoff Theorem which apply to Poincar\'e time-maps of Hamiltonian systems. Later on, Fonda and Gidoni~\cite{FonGid2020NODEA} extended this result for Hamiltonian systems coupling twisting components with nonresonant linear components. The coupling of twisting components with {\em resonant }linear components was studied by Chen and Qian~\cite{CheQia2022JDE}, where they used an Ahmad-Lazer-Paul type condition to provide a multiplicity result. Some further extensions of higher-dimensional Poincar\'e--Birkhoff Theorem to different coupled Hamiltonian systems can be found in~\cite{FonGarSfe2023JMAA, FonMamSfe2024RM, FonUll2024JDE, FonUll2024DIE, MamUll2024Pre}.

\medbreak

On the other hand, recently, Fonda and Ortega~\cite{FonOrt2023RCMPS} proved a multiplicity result for a two-point boundary value problem associated with Hamiltonian systems on a cylinder. Unlike the periodic problem, where the Poincar\'e--Birkhoff Theorem plays a central role, they did not use any twist condition. Similar to the periodic case, there are generalizations of this result for coupled Hamiltonian systems, see for example~\cite{FonMamObeSfe2024NODEA, Mam2024TMNA, FonUll2024NODEA}. While the Neumann problem for scalar second order equations has been widely studied, few papers in the literature provide multiplicity results for Neumann-type problems associated with systems of ordinary differential equations. To the best of our knowledge, multiplicity results for mixed periodic and Neumann-type problems associated with Hamiltonian systems have never been studied before.

\medbreak
It is the aim of this paper to study Hamiltonian systems by coupling two systems in which one is associated with periodic and the other with Neumann-type boundary conditions. To be more precise, we consider the Hamiltonian system
\begin{equation}\label{sys:main}
\begin{cases}
\dot{q}= \nabla_{p} H(t,q,p,u,v)\, ,\\
\dot{p}= -\nabla_{q} H(t,q,p,u,v)\,, \vspace{1mm}\\
\dot{u}= \nabla_v H(t,q,p,u,v) \,,\\
\dot{v} = - \nabla_u H(t,q,p,u,v) \,,
\end{cases}
\end{equation}
with the boundary conditions
\begin{equation}\label{eq:of:npbc}
    \begin{cases}
            q(a)=q(b)\,, \quad p(a) = p(b)\,,\\
            v(a)=0=v(b)\,.
    \end{cases}
\end{equation}
We write 
\begin{align*}
&q=(q_1 , \dots , q_M) \in\R^{M}, \quad p=(p_1 , \dots , p_M)\in\R^{M},\\
&u=(u_1 , \dots , u_{L}) \in\R^{L}, \quad v=(v_1 , \dots , v_{L}) \in\R^{L}\, .
\end{align*}
The function $H:[a,b] \times \R^{2M} \times\R^{2L} \to \R$, is continuous, and continuously differentiable with respect to $q,p,u$ and $v$.

\medbreak

Here are our hypotheses.

\medbreak

\ni $A1$. There exist $\kappa_i>0$ for $i=1, \dots, M$ and $\tau_j>0$ for $j=1, \dots, L$ such that the function $H(t,q,p,u,v)$ is $\kappa_i$-periodic in the variable~$q_i$ and is $\tau_j$-periodic in the variable~$u_j$.



\medbreak

We now introduce the {\em twist condition} by first considering the case when $\mathcal{D}$ is a rectangle in $\mathbb{R}^M$, i.e.,
$$
\mathcal{D}= [c_1, d_1] \times \cdots \times [c_M,d_M]\,,
$$
and we denote by $\mathring{\mathcal D}$ its interior.

\medbreak

\noindent $A2$. There exists an $M$-tuple $\sigma=(\sigma_1 , \dots , \sigma_M) \in \{-1,1\}^{M}$ such that for every $C^1$-function $\mathcal{W}:[a,b] \to \R^{2L}$, all the solutions $(q,p)$ of system
\begin{equation}\label{eq:of:hamiltonian:unperterbed:for:higher}
\begin{cases}
    \dot{q}= \nabla_{p} H(t,q,p,\mathcal{W}(t))\\
    \dot{p}= -\nabla_{q} H(t,q,p,\mathcal{W}(t))    
\end{cases} 
\end{equation}
starting with $p(a)\in{\cal D}$ are defined on $[a,b]$, and for every $i=1, \dots, M$ we have
$$
\begin{cases}
p_{i}(a)=c_i \quad\Rightarrow\quad \sigma_{i}(q_{i}(b)-q_{i}(a)) < 0\,,\\
p_{i}(a)=d_i\quad\Rightarrow\quad \sigma_{i}(q_{i}(b)-q_{i}(a)) > 0\,.
\end{cases}
$$

\medbreak

\ni $A3$. All the solutions of system~\eqref{sys:main} satisfying $v(a)=0$ are defined on $[a,b]$. 

\medbreak


\medbreak

Here is our main result.

\begin{Thm}\label{thm:main}
Assume that $A1$--$A3$ hold true.
Then there are at least $M+L+1$ geometrically distinct solutions of the boundary value problem~\eqref{sys:main}--\eqref{eq:of:npbc} such that $p(a) \in \mathring{\mathcal D}$.
\end{Thm}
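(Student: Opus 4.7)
The plan is to reduce the boundary value problem~\eqref{sys:main}--\eqref{eq:of:npbc} to a finite-dimensional critical point problem on a space homotopy equivalent to the torus $\T^{M+L}$, and then apply Lusternik--Schnirelmann category theory. This parallels the general strategy developed in~\cite{FonGid2020NODEA, FonUll2024JDE} for fully periodic problems and in~\cite{FonOrt2023RCMPS, FonUll2024NODEA} for Neumann-type ones.

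As a first step, I would use $A3$ together with the twist hypothesis $A2$ to show that every solution of~\eqref{sys:main} starting with $p(a)\in\mathcal{D}$ and $v(a)=0$ is defined on the whole interval $[a,b]$: $A3$ controls the $(u,v)$-component directly, while the $(q,p)$-component cannot blow up because $\nabla_{q}H$ and $\nabla_{p}H$ are periodic in $q$ and $u$, so only the $p$-direction is potentially problematic, and the sign information in $A2$ prevents $p$ from first exiting through $\partial\mathcal{D}$ with the wrong outward orientation. Next, I would perform a generating function reduction of Chaperon type: partition $[a,b]$ finely enough that the Hamiltonian flow on each small subinterval admits a generating function, and sum these to build a $C^1$ function $\Phi$ on a finite-dimensional manifold whose critical points are in bijection with the solutions of~\eqref{sys:main}--\eqref{eq:of:npbc}. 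The periodic conditions on $(q,p)$ enter as a cyclic identification of discretized variables, while the Neumann conditions $v(a)=v(b)=0$ enter as leaving the corresponding $u$-endpoints free. By $A1$, the function $\Phi$ is invariant under the lattice $\prod_{i=1}^{M}\kappa_i\Z \times \prod_{j=1}^{L}\tau_j\Z$ acting by translations on the respective $q$- and $u$-variables, so it descends to a function $\ov\Phi$ on a quotient homotopy equivalent to $\T^{M+L}$.

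To conclude, I would verify the Palais--Smale condition for $\ov\Phi$: the twist hypothesis $A2$ produces an inward-pointing gradient behaviour on the part of the boundary coming from $\partial\mathcal{D}$, confining the descent flow, while $A1$ and $A3$ yield compactness in the remaining directions. Since $\cat(\T^{M+L})=M+L+1$, the standard Lusternik--Schnirelmann minimax principle then provides at least $M+L+1$ critical orbits of $\ov\Phi$, hence at least $M+L+1$ geometrically distinct solutions of~\eqref{sys:main}--\eqref{eq:of:npbc} satisfying $p(a)\in\mathring{\mathcal D}$.

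The step I expect to be most delicate is the confinement of the gradient dynamics to $\{p(a)\in\mathring{\mathcal D}\}$ using only the sign information in $A2$: a natural way to handle this is to modify $H$ outside $\mathcal{D}$ by a cut-off that preserves the periodicity in $q$, thereby placing the variational problem on a compact manifold, and then to argue that every critical point of the modified $\Phi$ automatically satisfies $p(a)\in\mathring{\mathcal D}$ by reading off the twist inequalities along the associated solution. A secondary technical hurdle is arranging the generating function reduction so that the periodic conditions in $(q,p)$ and the Neumann conditions in $v$ can be encoded simultaneously within a single finite-dimensional functional without introducing spurious critical points on the boundary of the parameter domain.
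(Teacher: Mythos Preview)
Your proposal follows a genuinely different route from the paper. The paper works in infinite dimensions: it builds an action functional on $\big(\widetilde H^{1/2}_\pi\big)^M\times\big(H^{1/2}_\pi\big)^M\times X_\alpha^L\times Y_\beta^L$ (the first two factors encoding periodicity in $(q,p)$, the last two encoding the Neumann condition $v(a)=v(b)=0$ via the Fourier spaces of~\cite{FonOrt2023RCMPS}) and applies Szulkin's abstract critical point theorem (Theorem~\ref{th:Szulkin}), which already delivers the $M+L+1$ count without any finite-dimensional reduction. Your Chaperon-type broken-trajectory scheme is in principle a legitimate alternative, and if carried out would have the advantage of staying on compact finite-dimensional manifolds; the paper's approach, by contrast, plugs directly into the existing $H^{1/2}$ and $X_\alpha\times Y_\beta$ machinery and avoids having to devise a discretisation that encodes the periodic and Neumann conditions simultaneously.

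That said, your plan has two genuine gaps. First, your reading of $A2$ is off: the twist hypothesis says nothing about $p(t)$ remaining in $\mathcal{D}$, only about the sign of $q_i(b)-q_i(a)$ when $p_i(a)$ sits on a face of $\mathcal D$; global existence on $[a,b]$ is \emph{assumed} in $A2$ and $A3$, not derived from the sign condition. Second, and more seriously, the step you yourself flag as delicate---confining critical points to $\{p(a)\in\mathring{\mathcal D}\}$---is where the real work lies, and ``modify $H$ outside $\mathcal{D}$ by a cut-off'' is not enough. The paper's mechanism (taken from~\cite{FonUre2016CRASP,FonUre2017APAN}) is quite specific: after a first cut-off $\widehat H=\sigma(|p|)\sigma(|v|)H$ making the gradient bounded, one adds a further term $R_\lambda(t,\cdot)=\mathscr R_\lambda\circ\mathcal Z_t^{-1}$, where $\mathscr R_\lambda(\xi,\eta,\mu,\nu)=\lambda\gamma(|\eta|)$ is pulled back along the \emph{flow} $\mathcal Z_t$ of $\widehat H$. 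This makes $\widetilde H$ exactly $\lambda|p|^2$ for $|p|$ large (so that the linear part $\mathscr L$ of the functional is invertible), and---this is the point---any solution of the $\widetilde H$-system, viewed in the moving frame $\zeta(t)=\mathcal Z_t^{-1}(z(t))$, satisfies the decoupled drift $\dot\xi=\lambda\gamma'(|\eta|)\eta/|\eta|$, $\dot\eta=\dot\mu=\dot\nu=0$. Combining this with the twist inequalities and the a~priori bound $|\mathcal Q_\pi-\xi|<\bar c\pi$, one checks case by case (Proposition~\ref{pro:final:low}) that $q(\pi)\neq q(0)$ whenever $p(0)\notin\mathring{\mathcal D}$, so no spurious periodic solution is created. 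Nothing in your outline produces an analogue of this moving-frame argument, and a plain cut-off in $p$ neither yields the required invertibility/compactness nor rules out critical points with $p(a)\in\partial\mathcal D$ or beyond.
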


 Notice that, when a solution has been found, infinitely many others appear by just adding an integer multiple of $\kappa_i$ to the $q_i$-th component or adding an integer multiple of $\tau_j$ to the $u_j$-th component. We say that two solutions are {\em geometrically distinct} if they cannot be obtained from each other in this way.

The rest of the paper is organized as follows.

\medbreak

In Section~\ref{sec:2} we provide a proof of our result for the low dimensional case by taking $L=M=1$. The proof in higher dimension is given in Section~\ref{sec:3}.  Finally, in Section~\ref{sec:4} we provide a possible example of application of the main result, we add some other possible variants of {\em twist assumptions}, and conclude with a discussion of some open problems in this direction.

\section{Proof of Theorem~\ref{thm:main}\label{sec:2} in low dimension}
We discuss the case when $M=L=1$, and use the notations $\kappa=\kappa_1$ and $\tau=\tau_1$ for simplicity. The proof follows a variational approach and is based on a theorem by Szulkin which will be recalled later. To simplify the proof, we assume that the Hamiltonian function $H(t,q,p,u,v)$ is $C^1$ in all variables just to follow the idea in~\cite{FonUre2016CRASP}. One can prove the same result by assuming $H(t,q,p,u,v)$ to be continuous and $C^1$ in $(q,p,u,v)$, but then should follow the proof in~\cite{FonUre2017APAN}, which is more technical.

\medbreak

Without loss of generality, we may assume that $[a,b]=[0, \pi]$. By $A2$, $A3$ and a standard compactness argument \cite[Appendix]{FonSfeToa2024Pre}, there exists a constant $C>1$ such that, for any solution $(q,p,u,v)$ of~\eqref{sys:main} satisfying $p(0) \in [c, d]$ and $v(0)=0$, one has that
\begin{equation*}
|p(t)| \le C\,, \quad |v(t)| \le C\,,\quad\hbox{ for every } t \in [0,\pi]\,.
\end{equation*} 
Let $\sigma:\R \to\R$ be a $C^{\infty}$-function such that
\begin{equation}\label{eq:of:sigma:function} 
\sigma(s)=
\begin{cases} 
1\,, &\text{if } \; |s| \le C \,, \\
0\,, & \text{if } \; |s| \ge C  +1\,,
\end{cases}
\end{equation}
and set 
\begin{equation*}
\widehat{H}(t,q,p,u,v)= \sigma(|p|)\sigma(|v|) H(t,q,p,u,v)\,.
\end{equation*}
Now consider the modified system
\begin{equation}\label{sys:main:modified:1}
\begin{cases}
\dot{q}= \partial_{p} \widehat{H}(t,q,p,u,v)\, ,\\
\dot{p}= -\partial_{q} \widehat{H}(t,q,p,u,v)\,, \vspace{1mm}\\
\dot{u}= \partial_v \widehat{H}(t,q,p,u,v) \,,\\
\dot{v} = - \partial_u \widehat{H}(t,q,p,u,v) \,,
\end{cases}
\end{equation}
so that the partial derivatives of $\widehat{H}(t,q,p,u,v)$ with respect to $q,p,u$ and $v$ are bounded. In particular, there exists $\bar c>0$ such that 
\begin{equation} \label{eq:of:bddness:of:hatH}
\left| \frac{\partial \widehat{H}}{\partial p} \right| < \bar c\,,    
\end{equation}

\medbreak
Without loss of generality, we assume that $[c,d]=[-1,1]$, and for $\zeta=(\xi, \eta, \mu, \nu) \in \R^{4}$, we denote by $\mathcal{Z}(t, \zeta)$ the value of the solution $z=(q,p,u,v)$ of system~\eqref{sys:main:modified:1} with $z(0)= \zeta$. The function $\mathcal{Z}: \R \times \R^2 \times \R^2 \to \R^2 \times \R^2 $ is globally defined. For any $t,$ we consider the flow $ \mathcal{Z}_t := \mathcal{Z}(t; \cdot): \R^2  \times \R^2 \to \R^2 \times \R^2$, and denote the corresponding coordinates by $ \mathcal{Q}_{t}, \mathcal{P}_{t}, \mathcal{U}_{t}, \mathcal{V}_{t} : \R^4 \to \R $, i.e., $\mathcal{Z}_{t}= ( \mathcal{Q}_{t}, \mathcal{P}_{t}, \mathcal{U}_{t}, \mathcal{V}_{t})$. Based on our assumptions, the following assertions hold true:

\medbreak
 {\ni (i)} The map $\mathcal{Z}_{0}$ is identity on $\R^4$.
 \medbreak

 {\ni (ii)} $\mathcal{Z}_t((\xi, \eta, \mu, \nu) + h) = \mathcal{Z}_t(\xi, \eta, \mu, \nu)+h $ for every $h \in \kappa \Z \times \{0\} \times \tau \Z \times \{0\}$.
 \medbreak
{\ni (iii)} Each $\mathcal{Z}_t$ is a canonical transformation of $\R^4$ on itself. 
\medbreak

{\ni (iv)} $\mathcal{Z}(t;\xi, \eta, \mu, \nu)= (\xi, \eta, \mu, \nu)$ if $ |\eta| \geq C+1$ or $|\nu| \geq C+1$.

\medbreak

Using~\eqref{eq:of:bddness:of:hatH} and the Mean Value Theorem, we have
\begin{equation}\label{eq:of:appl:of:mean:val}
|\mathcal{Q}_{\pi}( \xi, \eta, \mu, \nu)-  \xi| < \bar c \pi\,.    
\end{equation}
We take $\sigma_1=1$ in assumption $A2$ (the case $\sigma_1=-1$ can be use alternatively), and
thus by Ascoli--Arezl\`a Theorem there exists $\epsilon \in\, ]0,1[$ such that
\begin{equation}\label{eq:after:Asc:arz}
    \begin{cases}
 \mathcal{Q}_{\pi}( \xi, \eta, \mu, 0)-   \xi < 0 \quad \hbox{ for every } \xi,  \mu \in \R, \; \eta \in [-1-\epsilon, -1]\,,\\
\mathcal{Q}_{\pi}( \xi, \eta, \mu, 0) -  \xi > 0 \quad \hbox{ for every } \xi, \mu \in \R, \; \eta \in [1, 1+ \epsilon] \,.
    \end{cases}
\end{equation}
We choose a $C^{\infty}$-function $\gamma:[0, \infty[ \to \R$ with
\begin{equation}\label{eq:of:gamma}
    \begin{cases}
        \gamma(s)= 0 \hbox{ for } s \in [0,1]\,, \quad \gamma'(s) \geq 0 \hbox{ for } s \in\, ]1,1+ \epsilon[\,,\\
        \gamma'(s) \geq 1 \hbox{ for } s \in [1+ \epsilon, 2]\,, \quad \gamma(s)= s^2 \hbox{ for } s \in [ 2,+ \infty[\,,
    \end{cases}
\end{equation}
and let $\lambda >1$ be a parameter, to be fixed later. Define a function $ \mathscr{R}_\lambda: \R^{4} \to \R$ by 
$$
\mathscr{R}_\lambda(\xi, \eta, \mu, \nu):=  \lambda \gamma(|\eta|)\,,
$$
and the function $ R_\lambda: \R \times \R^4 \to \R$ by
$$
R_{\lambda}(t, \cdot)= \mathscr{R}_\lambda \circ \mathcal{Z}_t^{-1}, \quad \hbox{ if } 0 \leq t \leq \pi\,.
$$
Now set 
$$
\widetilde{H}(t, z)= \widehat{H}(t,z) + R_{\lambda}(t,z)\,,
$$
and consider the modified system
\begin{equation}\label{sys:main:modified:2}
\begin{cases}
\dot{q}= \partial_{p} \widetilde{H}(t,q,p,u,v)\, ,\\
\dot{p}= -\partial_{q} \widetilde{H}(t,q,p,u,v)\,, \vspace{1mm}\\
\dot{u}= \partial_v \widetilde{H}(t,q,p,u,v) \,,\\
\dot{v} = - \partial_u \widetilde{H}(t,q,p,u,v) \,.
\end{cases}
\end{equation}
The function $\widetilde{H} : [0, \pi] \times \R^4 \to \R$ satisfies the following properties.

\medbreak

\ni $(a)$ $\widetilde{H}(t, z)  = \widetilde{H}(t, z+h)$ for every $h \in \kappa \Z \times \{0\} \times \tau \Z \times \{0\}$.
\medbreak
\ni $(b)$ $\widetilde{H}(t, z) = \lambda |p|^2$ if $ |p| \geq C+1$.

\medbreak
\ni $(c)$ $\widetilde{H}$ and $\widehat{H}$ coincide on the set 
$$B=\{ (t, \mathcal{Z}(t, \xi, \eta, \mu, \nu))  :  t \in [ 0, \pi], \; \eta \in [-1,1]\}\,.$$

\medbreak

We will now introduce the function spaces and the needed functional.

\subsection{The function spaces}
For the variables $q$ and $p$, we consider the space $H_{\pi}^\frac{1}{2}(0, \pi)$, whose elements are those real valued functions $f$ in $L^{2}(0, \pi)$ with the property that, writing the associated Fourier series
$$
f(t) \sim \sum_{ k \in \Z} f_k e^{2k i t}\,,
$$
one has that
$$
 \sum_{k \in \Z} (1+ |k|) |f_k|^2 < +\infty\,.
$$
This space $H_{\pi}^\frac{1}{2}$ is a Hilbert space with the scalar product given by
$$
\left\langle \sum_{ k \in \Z} f_k e^{2k i t}, \sum_{ k \in \Z} g_k e^{2k i t} \right\rangle_\frac{1}{2}= f_0 g_0^{*} + \sum_{ 0 \neq k \in \Z} |k| f_k g_{k}^{*} \,,
$$
where $g_{k}=g_{-k}^{*} \in \mathbb{C}$ and $f_{k}=f_{-k}^{*} \in \mathbb{C}$.

\medbreak
On the other hand, for the variables $u$ and $v$, we use the spaces constructed in~\cite{FonOrt2023RCMPS} as follow.

\medbreak

For any $\alpha \in\,]0,1[$\,, we define $X_{\alpha}$ as the set of those real valued functions $\tilde{u}\in L^{2}(0,\pi)$ such that 
\begin{equation*}
	\tilde{u}(t)\sim \sum\limits_{m=1}^{\infty}\tilde{u}_{m}\cos(mt)\,,
\end{equation*} 
where $(\tilde{u}_{m})_{m\ge 1}$ is a sequence in $\R$  satisfying
\begin{equation*}
	\sum\limits_{m=1}^{\infty}m^{2\alpha}\tilde{u}_{m}^{2}<\infty\,.
\end{equation*}
The space $X_{\alpha}$ is endowed with the inner product and the norm
\begin{equation*}
	\langle \tilde{u},\, \tilde{\phi}\rangle_{X_{\alpha}}=\sum\limits_{m=1}^{\infty}m^{2\alpha}\tilde{u}_{m}\tilde{\phi}_{m}\,,
	\qquad
	\|\tilde{u}\|_{X_{\alpha}}=\sqrt{\sum\limits_{m=1}^{\infty}m^{2\alpha}\tilde{u}_{m}^{2}}\,.
\end{equation*}
For any $\beta \in\,]0,1[$\,, we define $Y_{\beta}$ as the set of those real valued functions $v\in L^{2}(0,\pi)$ such that 
\begin{equation*}
	v(t)\sim \sum\limits_{m=1}^{\infty}v_{m}\sin(mt)\,,
\end{equation*} 
where $(v_{m})_{m\ge 1}$ is a sequence in $\R$  satisfying
\begin{equation*}
	\sum\limits_{m=1}^{\infty}m^{2\beta}v_{m}^{2}<\infty\,.
\end{equation*}
The space $Y_{\beta}$ is endowed with the inner product and the norm
\begin{equation*}
	\langle v,\, \rho\rangle_{Y_{\beta}}=\sum\limits_{m=1}^{\infty}m^{2\beta}v_{m}\rho_{m}\,,
	\qquad	\|p\|_{Y_{\beta}}=\sqrt{\sum\limits_{m=1}^{\infty}m^{2\beta}v_{m}^{2}}\,.
\end{equation*}
From now on, we will consider functions $q,p,u,v$ which can be written as
\begin{equation}\label{eq:of:notations}
    \begin{cases}
	q(t)=\bar q+ \tilde  q(t)\,, \quad \bar q=\frac{1}{\pi}\int\limits_{0}^{\pi}q(t)\,dt\,,\\
	u(t)=\bar u+\tilde u(t)\,,\quad \bar u=\frac{1}{\pi}\int\limits_{0}^{\pi}u(t)\,dt\,,
 \end{cases}
\end{equation}
where $\tilde q$ and $p$ belong to the spaces $\widetilde{H}_{\pi}^{\frac{1}{2}}$ and $H_{\pi}^{\frac{1}{2}}$ respectively, while functions $\tilde u$ and $v$ belong to the spaces $X_{\alpha}$ and $Y_{\beta}$ respectively. Here, we use the notation $\widetilde{H}_{\pi}^{\frac{1}{2}}$ for the Hilbert space of those $\tilde{u}$ in $H_{\pi}^{\frac{1}{2}}$ whose mean is zero.

\medbreak

Choose two positive numbers $\alpha,\beta$ such that 
$$ 
\alpha <\half<\beta\quad\hbox{ and }\quad\alpha+\beta=1\,.
$$
Consider the torus $\mathbb{T}^{2}=(\R/\kappa\mathbb{Z}) \times (\R/\tau\mathbb{Z})$, and the space $E= E_1 \times E_2$ where $E_1=\widetilde{H}_{\pi}^{\frac{1}{2}} \times H_{\pi}^{\frac{1}{2}}$ and $E_2=  X_{\alpha} \times Y_{\beta} $. The space $E$ is endowed with the scalar product
\begin{align*}	\langle(\tilde{q}\,,p\,,\tilde{u}\,,v),\,(\widetilde{Q}\,,P\,,\widetilde{U}\,,V)\rangle_{E}
=&\langle\tilde{q},\,\widetilde{Q}\rangle_\frac{1}{2}+\langle p,\,P\rangle_\frac{1}{2}+\\
&+  \langle \tilde{u},\,\widetilde{U}\rangle_{X_{\alpha}}+\langle v,\,V\rangle_{Y_{\beta}} \,,
\end{align*} 
and the corresponding norm
\begin{equation*}
\|(\tilde{u}\,,p\,,\tilde{u}\,,v)\|_{E}=\sqrt{\|\tilde{u}\|^{2}_\frac{1}{2}+\|p\|^{2}_\frac{1}{2}+ \|\tilde{u}\|^{2}_{X_{\alpha}}+\|v\|^{2}_{Y_{\beta}}}\,.
\end{equation*}
Since $H_{\pi}^\frac{1}{2}(0, \pi)$, $ X_\alpha$ and $Y_\beta$ are separable Hilbert spaces~\cite[Proposition 2.3 and 2.6]{FonOrt2023RCMPS}, the same is true for $E$. 

\medbreak

By $A1$, the Hamiltonian function $ \widetilde{H}$ is $\kappa$-periodic in $q$ and $\tau$-periodic in $u$, hence using the notations in~\eqref{eq:of:notations}, we can assume that $(\bar q, \bar u)\in \mathbb{T}^{2}$ and thus look for solutions $( w, \bar q, \bar u ) \in E \times \mathbb{T}^{2}$, where
$$
w =(\tilde q,p, \tilde u, v)\,.
$$
These solutions will be found as critical points of a suitable functional, by applying the following theorem of Szulkin~\cite{Szu1990NA} (see also~\cite{FouLupRamWil, Liu1989JDE}).  

\begin{Thm}\label{th:Szulkin}
If $\varphi: E\times \mathbb{T}^{N}\to \R$ is a continuously differentiable functional of the type
\begin{equation*}
\varphi(w ,\bar s)=\frac{1}{2}\langle \mathscr{L}w ,w \rangle_{E}+\psi(w ,\bar s)\,,
\end{equation*}
where $\mathscr{L}:E\to E$ is a bounded selfadjoint invertible operator and $d\psi(E\times \mathbb{T}^{N})$ is relatively compact, then $\varphi$ has at least $N+1$ critical points.
\end{Thm}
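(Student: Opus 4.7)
The plan is to prove Szulkin's theorem by a Lusternik--Schnirelmann minimax argument based on the cup-length of the torus, combined with a saddle-point-type deformation adapted to the strongly indefinite quadratic form $\frac{1}{2}\langle\mathscr{L}w,w\rangle_E$.

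First I would spectrally decompose $E=E^+\oplus E^-$ into the closed subspaces on which $\mathscr{L}$ is positive, respectively negative, definite (both possibly infinite-dimensional, since $\mathscr{L}$ is only assumed selfadjoint and invertible), and choose an equivalent Hilbert norm so that $\langle\mathscr{L}w,w\rangle_E=\|w^+\|^2-\|w^-\|^2$. Next I would verify the Palais--Smale condition for $\varphi$ on $E\times\mathbb{T}^N$: compactness in the torus factor is automatic, while on $E$, if $d\varphi(w_n,\bar s_n)\to 0$ one has $\mathscr{L}w_n=-\nabla_w\psi(w_n,\bar s_n)+o(1)$, and since $d\psi$ has relatively compact image (in particular is bounded) and $\mathscr{L}$ is invertible, $w_n$ stays bounded and a subsequence converges strongly.

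The core step is the minimax construction. The cup-length of $\mathbb{T}^N$ over $\mathbb{Z}/2$ equals $N$: there exist classes $\omega_1,\dots,\omega_N\in H^1(\mathbb{T}^N;\mathbb{Z}/2)$ with $\omega_1\smile\cdots\smile\omega_N\neq 0$. I would define, for $k=0,1,\dots,N$,
\begin{equation*}
c_k=\inf_{[A]\geq k}\ \sup_{(w,\bar s)\in A}\varphi(w,\bar s),
\end{equation*}
where $[A]\geq k$ denotes a cohomological (or relative category) index capturing the cup product $\omega_1\smile\cdots\smile\omega_k$. A pseudo-gradient deformation respecting the splitting $E^+\oplus E^-$, together with Palais--Smale, then shows that each $c_k$ is a critical value; if $c_k=c_{k+1}$ for some $k$, a standard category argument forces the critical set at that level to have positive category and therefore to contain infinitely many points. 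In all cases one obtains at least $N+1$ critical points.

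The hard part will be making the minimax scheme actually work in the strongly indefinite setting when $\dim E^-=\infty$: sublevel sets of $\varphi$ are not compact, and the quadratic form is unbounded below, so ordinary LS theory does not apply directly. The remedy, which I would adopt from the linking/saddle-point tradition of Benci--Rabinowitz and Szulkin, is to restrict the minimax classes to sets which link a suitable reference manifold built from $E^+$, and to use the relative compactness of $d\psi$ both to ensure $\sup_A\varphi<+\infty$ on admissible classes and to construct the needed equivariant pseudo-gradient flow. Once this machinery is in place, the $N+1$ critical points drop out of the cup-length comparison as described.
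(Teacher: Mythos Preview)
The paper does not prove this statement: Theorem~\ref{th:Szulkin} is quoted from Szulkin~\cite{Szu1990NA} (with the parallel references~\cite{FouLupRamWil, Liu1989JDE}) and used as a black box, so there is no proof in the paper to compare against.

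That said, your sketch is broadly the strategy of Szulkin's original argument. The spectral splitting $E=E^+\oplus E^-$ and the verification of the Palais--Smale condition are correct as you describe them: relative compactness of $d\psi$ gives boundedness of $\nabla_w\psi$, so $\mathscr{L}w_n$ is bounded, $w_n$ is bounded by invertibility of $\mathscr{L}$, and then a subsequence of $\nabla_w\psi(w_n,\bar s_n)$ converges, forcing $w_n$ to converge. Compactness of $\mathbb{T}^N$ handles the $\bar s$ component.

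Where your outline is loosest is exactly where you flag it: the minimax scheme in the strongly indefinite case. The phrase ``a cohomological (or relative category) index capturing the cup product'' hides the whole difficulty. Ordinary Lusternik--Schnirelmann category and standard cup-length arguments collapse when $\dim E^-=\infty$, because the relevant sublevel sets and linking spheres are contractible. What Szulkin actually builds is a \emph{relative category} $\mathrm{cat}_{X,Y}(A)$ adapted to pairs, together with a class of admissible deformations that are compact perturbations of a linear flow generated by $\mathscr{L}$; Fournier--Lupo--Ramos--Willem refine this to a \emph{limit relative category} via Galerkin-type finite-dimensional approximations. The $N+1$ critical values then come from the relative category of $E^-\times\mathbb{T}^N$ inside $E\times\mathbb{T}^N$ modulo a large sphere in $E^-$, which is where the cup-length of $\mathbb{T}^N$ enters. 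Your reference to Benci--Rabinowitz linking is in the right spirit, but the specific machinery you need is Szulkin's relative category and his restricted class of deformations; simply ``restricting the minimax classes to sets which link a reference manifold built from $E^+$'' is not quite enough to make the index monotone under the flow in infinite dimensions.
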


\subsection{The functional and the bilinear form}

We define a functional $\psi: E \times \mathbb{T}^{2}\to \R$ as 
\begin{align*}
	\psi(w ,\bar q, \bar u) &= \psi \big(  (\tilde q\,,p \,, \tilde u, v\,, \bar q,  \bar u\big)\\
	&= \int_0^\pi 
	\big[\widetilde{ H} \big(t\,,\bar q+\tilde q(t)\,,p(t)\,,\bar u+\tilde u(t)\,,v(t)\big) - \lambda |p(t)|^2\big]
	\,dt\,.
\end{align*}
In the following, we will treat $\mathbb{T}^{2}$ as being lifted to $\R^{2}$, so $E\times \mathbb{T}^{2}$ will often be identified with $E\times\R^{2}$\,. 
Similar to the proof provided in~\cite[Proposition 2.10]{FonOrt2023RCMPS} and~\cite[Proposition 19, Proposition 22]{FonMamObeSfe2024NODEA}, we can show that $\psi$ is continuously differentiable, and the gradient function $\partial \psi $ has a relatively compact image.
In what follows we introduce the operator $\mathscr{L}$.

\medbreak

Denote by $\widetilde C^{1}([0,\pi])$, the space of those functions in $ C^{1}([0,\pi])$ which have mean zero, consider the space
$$
D= \widetilde C^{1}([0,\pi])\times C^{1}([0,\pi]) \times \widetilde C^1([0,\pi])\times C^1_0([0,\pi]) \,,
$$
and define a bilinear form $\BB : D \times D \to \R$ as follows.
For every 
$w =(\tilde q, p  , \tilde u, v)$ and $W =(\widetilde{Q}, P , \widetilde U, V)$ in $D$, 
\begin{multline}\label{eq:of:bilinear}
\BB(w ,W) =\int_0^\pi \Big[ 
	\langle p'(t),\widetilde{Q}(t) \rangle  -\langle \tilde q'(t) ,P(t)\rangle + 2 \lambda \langle p(t), P(t) \rangle\\
 +\langle v'(t),\widetilde{U}(t) \rangle  -\langle \tilde{u}'(t) ,V(t)\rangle
	\Big]\,dt\,.    
\end{multline}
It is immediate to see that the bilinear form $\BB$ is symmetric.

\begin{pro}\label{pro:of:biliear:form}
The set $D$ is a dense in $E$, and the bilinear form $\BB : D \times D \to \R$ is continuous with respect to the topology of $E\times E\,$.  
\end{pro}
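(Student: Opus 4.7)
Both assertions reduce to factorwise analysis since $D$ is a product of four subspaces, one in each factor of $E$, and $\BB$ splits as a sum of five integrals each involving at most two components. The common tool is the explicit Fourier description of all four Hilbert spaces: $\widetilde{H}^{1/2}_\pi$ and $H^{1/2}_\pi$ use the exponential basis $e^{2kit}$, while $X_\alpha$ and $Y_\beta$ use the cosine and sine bases respectively, and in each case the norm is a weighted $\ell^2$ norm on the Fourier coefficients.

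\textbf{Density.} In each factor, the finite trigonometric polynomials lying in the corresponding subspace of $D$ form a dense subset: $\pi$-periodic $C^1$ exponential polynomials in the $q$- and $p$-slots (with mean zero in the $\tilde q$-slot), finite cosine sums in the $\tilde u$-slot (automatically $C^1$ with mean zero), and finite sine sums in the $v$-slot (automatically $C^1$ and vanishing at $0$ and $\pi$). Density is immediate since, by definition, each Hilbert space is the closure of such polynomials in its weighted $\ell^2$ norm, so truncation of any Fourier series converges in norm.

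\textbf{Continuity.} I plan to bound $\BB(w,W)$ by a constant times $\|w\|_E\|W\|_E$ term by term. The pairing $\int_0^\pi \langle p, P\rangle\,dt$ is dominated by $\|p\|_{L^2}\|P\|_{L^2}$, and hence by $\|p\|_{1/2}\|P\|_{1/2}$ via the standard embedding $H^{1/2}_\pi \hookrightarrow L^2$. For the periodic derivative pairings $\int_0^\pi \langle p', \widetilde Q\rangle\,dt$ and $\int_0^\pi \langle \tilde q', P\rangle\,dt$, Parseval turns them into sums of the form $\pi\sum_k 2ki\, p_k \widetilde Q_k^*$, and writing $|k|=\sqrt{|k|}\cdot\sqrt{|k|}$ before applying Cauchy--Schwarz yields the bounds $\|p\|_{1/2}\|\widetilde Q\|_{1/2}$ and $\|\tilde q\|_{1/2}\|P\|_{1/2}$. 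For the Neumann-type derivative pairings $\int_0^\pi \langle v', \widetilde U\rangle\,dt$ and $\int_0^\pi \langle \tilde u', V\rangle\,dt$, the orthogonality of cosines and sines on $[0,\pi]$ converts them into sums $\tfrac{\pi}{2}\sum_m m\, v_m \widetilde U_m$ and analogously, and the factorisation $m=m^\alpha\cdot m^\beta$, legitimate exactly because $\alpha+\beta=1$, delivers the bound $\|v\|_{Y_\beta}\|\widetilde U\|_{X_\alpha}$ by Cauchy--Schwarz, and symmetrically the bound $\|\tilde u\|_{X_\alpha}\|V\|_{Y_\beta}$ for the last term.

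\textbf{Main point.} The essential ingredient, more than the routine Fourier bookkeeping, is the balance $\alpha+\beta=1$ fixed earlier; it is precisely what permits the single factor of $m$ coming from differentiation in the Neumann block to be distributed as $m^\alpha$ on one function and $m^\beta$ on the other, thereby reconstructing the norms of $X_\alpha$ and $Y_\beta$ exactly. Without that balance $\BB$ would fail to extend continuously to $E_2$, and the whole variational framework would collapse.
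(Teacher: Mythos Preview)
Your argument is correct and is in fact the explicit computation that lies behind the references the paper invokes. The paper's own proof is almost entirely by citation: it splits $\BB=\BB_1+\BB_2$ along the periodic/Neumann decomposition, appeals to \cite{FonOrt2023RCMPS} for the density of $\widetilde C^1\times C^1_0$ in $E_2$ and for the continuity of $\BB_2$ (via \cite{FonMamObeSfe2024NODEA}), and to \cite{HofZeh1994Bir} for the continuity of $\BB_1$. Your route is the same decomposition carried out by hand: truncation of Fourier series for density, and Parseval plus Cauchy--Schwarz for each of the five terms of $\BB$. The one place where your write-up adds genuine content over the paper's is your identification of the constraint $\alpha+\beta=1$ as the exact mechanism that makes $\BB_2$ continuous, via the splitting $m=m^\alpha\cdot m^\beta$; the paper does not surface this point here (it is buried in the cited works), and it is indeed the heart of the matter for the Neumann block.
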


\begin{proof}
We know by~\cite[Proposition 2.5 and 2.8]{FonOrt2023RCMPS} that $ \widetilde C^1([0,\pi])\times C^1_0([0,\pi])$ is a dense subspace of $E_2$, and being $\widetilde C^{1}([0,\pi])\times C^{1}([0,\pi])$ a dense subspace of $E_1 $, we can say that $D$ is a dense subspace of $E$. In order to prove the second part of the statement, let us write
$$
\BB(w ,W)= \BB_1 \big((\tilde{q},p), (\widetilde{Q},P)\big) + \BB_2\big((\tilde u, v), (\widetilde U, V)\big)\,,
$$
where 
\begin{equation}\label{eq:of:b1}
 \BB_1 \big((\tilde{q},p), (\widetilde{Q},P) \big)= \int_0^\pi \Big(
	\langle p'(t),\widetilde{Q}(t) \rangle - \langle \tilde q'(t),P(t)\rangle+ 2 \lambda \langle p(t), P(t) \rangle  \Big)dt \,,    
\end{equation}
and 
\begin{equation}\label{eq:of:b2}
    \BB_2\big((\tilde u, v), (\widetilde U, V)\big)= \int_0^\pi \Big(\langle v'(t),\widetilde U(t) \rangle  -\langle \tilde u'(t) ,V(t)\rangle \Big)dt \,.
\end{equation}
It has been proved in~\cite{HofZeh1994Bir} and~\cite[Section 3.4]{FonMamObeSfe2024NODEA} that $\BB_1$ and $\BB_2$ are continuous respectively with respect to the topology of $E_1$ and $E_2$. This completes the proof.
\end{proof}

\medbreak

The bilinear form $\BB : D \times D \to \R$ can thus be extended in a unique way to a continuous symmetric bilinear form $\BB : E \times E \to \R$, for which we maintain the same notation. A bounded selfadjoint operator $\mathscr{L}:E \to E$ can thus be defined by 
\begin{equation*}
  \langle \mathscr{L} w , W\rangle_{E}=\BB(w ,W)\,, 
\end{equation*}
for $w, W$ in $E$. Referring to~\eqref{eq:of:b1} and~\eqref{eq:of:b2}, we can write
 $$
 \mathscr{L}(\tilde q, p , \tilde u, v)=(\mathscr{L}_{1}(\tilde q, p ), \mathscr{L}_{2}(\tilde u, v))\,,
 $$
 where 
$$
\langle \mathscr{L}_{1}(\tilde q, p ), (\widetilde{Q},P)  \rangle_{E_1}=\BB_1 \big((\tilde{q},p), (\widetilde{Q},P) \big)\,, 
$$
and 
$$
\langle  \mathscr{L}_{2}(\tilde u, v), (\widetilde{U}, V)  \rangle_{E_2}=\BB_2((\tilde u, v), (\widetilde{U}, V)) \,,
$$
for every $w =(\tilde q, p  , \tilde u, v)$ and $W =(\widetilde{Q}, P ,  \widetilde U, V)$ in $E$.
It has been proved in~\cite[Proposition 2.14]{FonOrt2023RCMPS} that
\begin{equation}\label{eq:of:norm:of:l2}
    \|\mathscr{L}_{2}(\tilde{u}, v)\|_{E_2}= \frac{\pi}{2} \|(\tilde{u}, v)\|_{E_2}\,.
\end{equation}

\medbreak

We now need the following result.

\begin{lem}\label{lem:of:L1:greater:than}
For every $(\tilde{q}, p) \in E_1,$ we have
\begin{equation*}\label{ineq:of:L1:greater:than}
 \|\mathscr{L}_1(\tilde{q}, p)\|_{E_1} \geq 2 \pi \|(\tilde{q}, p)\|_{E_1}\,.   
\end{equation*}
\end{lem}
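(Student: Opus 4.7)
The approach I would take is to diagonalize $\mathscr{L}_1$ in Fourier coordinates, thereby reducing the vector inequality to a family of scalar estimates, one per Fourier mode. Expanding $\tilde q(t)=\sum_{k\neq 0}\tilde q_k e^{2kit}$ and $p(t)=\sum_{k\in\Z} p_k e^{2kit}$ (and similarly $\widetilde Q, P$), and inserting these expansions into~\eqref{eq:of:b1} while using $\int_0^\pi e^{2kit}\,dt=\pi\,\delta_{k,0}$, the bilinear form rewrites as
\begin{equation*}
\BB_1((\tilde q,p),(\widetilde Q,P)) = 2\pi i\sum_{k\neq 0} k\, p_k\widetilde Q_k^{*} \;-\; 2\pi i\sum_{k\neq 0} k\, \tilde q_k P_k^{*} \;+\; 2\pi\lambda\sum_{k\in\Z} p_k P_k^{*}.
\end{equation*}
Comparing with $\langle\mathscr{L}_1(\tilde q,p),(\widetilde Q,P)\rangle_{E_1}$ expanded via the definition of $\langle\cdot,\cdot\rangle_\frac{1}{2}$ on each factor of $E_1$, one reads off the Fourier coefficients of $\mathscr{L}_1(\tilde q,p)=(A,B)$:
\begin{equation*}
A_k = 2\pi i\,\sgn(k)\,p_k\ (k\neq 0),\quad B_0 = 2\pi\lambda\,p_0,\quad B_k = -2\pi i\,\sgn(k)\,\tilde q_k+\tfrac{2\pi\lambda}{|k|}\,p_k\ (k\neq 0).
\end{equation*}

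With these explicit formulas in hand, I would compute $\|\mathscr{L}_1(\tilde q,p)\|_{E_1}^2=\|A\|_{1/2}^2+\|B\|_{1/2}^2$ as a sum over Fourier modes and compare it mode-by-mode with $4\pi^2\|(\tilde q,p)\|_{E_1}^2$. The zero-mode contribution is immediate, since $|B_0|^2=4\pi^2\lambda^2|p_0|^2\geq 4\pi^2|p_0|^2$ as $\lambda>1$. For each pair of modes $\pm k$ with $k\neq 0$, the term $|A_k|^2=4\pi^2|p_k|^2$ already delivers the full $p$-direction contribution; the remaining task is to show that $|B_k|^2$ dominates $4\pi^2|\tilde q_k|^2$. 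Expanding $|B_k|^2$ produces the sign-indefinite cross term $\tfrac{8\pi^2\lambda}{k}\operatorname{Im}(\tilde q_k p_k^{*})$ which (after combining the contributions from $k$ and $-k$) must be absorbed into the positive diagonal quantities $4\pi^2|\tilde q_k|^2$ and $\tfrac{4\pi^2\lambda^2}{k^2}|p_k|^2$ via Young's inequality with a suitably chosen weight, exploiting that $\lambda$ is still at our disposal.

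The main obstacle is precisely the taming of that cross term so that the per-mode bound comes out with the sharp constant $4\pi^2$. The two positive pieces available to dominate $\tfrac{8\pi^2\lambda}{k}\operatorname{Im}(\tilde q_k p_k^{*})$ scale differently in $k$, so Young's inequality has to be applied carefully to balance them, and the freedom to fix $\lambda$ large enough (pending its later choice in the proof) is what ultimately enforces the constant $2\pi$. Once the per-mode inequality has been established uniformly in $k\neq 0$, summing over $k$ and invoking the definition of the $H_\pi^{1/2}$-norm gives $\|\mathscr{L}_1(\tilde q,p)\|_{E_1}^2\geq 4\pi^2\|(\tilde q,p)\|_{E_1}^2$, which is the claimed inequality.
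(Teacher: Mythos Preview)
Your overall strategy---expand in Fourier series and reduce the inequality to per-mode estimates---is exactly the route the paper takes, and your formulas for the Fourier coefficients of $\mathscr{L}_1(\tilde q,p)$ agree with the paper's \eqref{eq:of:tilte:xk}--\eqref{eq:of:yk}. Where you and the paper part ways is in the treatment of the cross term: the paper's displayed computation of $\|\tilde x\|_{1/2}^2+\|y\|_{1/2}^2$ simply omits it, whereas you correctly observe that $|B_k|^2$ contains the mixed piece $\tfrac{8\pi^2\lambda}{|k|}\,\sgn(k)\,\operatorname{Im}(\tilde q_k\,\overline{p_k})$, and that the contributions from $k$ and $-k$ do not cancel.

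The gap in your proposal is the claim that this cross term can be absorbed by Young's inequality together with the freedom to enlarge $\lambda$. That cannot produce the constant $4\pi^2$ per mode. After you allocate $4\pi^2|\tilde q_k|^2$ to the target, the only remaining positive piece is $\tfrac{4\pi^2\lambda^2}{|k|^2}|p_k|^2$, which contains no $\tilde q_k$, while the cross term scales linearly in $|\tilde q_k|$. A concrete obstruction: take all coefficients zero except $p_{\pm1}=1$ and $\tilde q_{1}=-iM$, $\tilde q_{-1}=iM$. Then
\[
\|\mathscr{L}_1(\tilde q,p)\|_{E_1}^2 = 8\pi^2\bigl(1+(\lambda-M)^2\bigr),
\qquad
4\pi^2\|(\tilde q,p)\|_{E_1}^2 = 8\pi^2(M^2+1),
\]
and the first quantity is strictly smaller whenever $0<\lambda<2M$. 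Thus for any fixed $\lambda>1$ the stated inequality fails once $M>\lambda/2$, so no choice of $\lambda$ made in advance (and no per-mode Young argument) can deliver the constant $2\pi$. In particular the paper's own computation reaches that constant only by silently dropping the cross terms.

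What the subsequent argument actually needs is merely that $\mathscr{L}_1$ is boundedly invertible, and that \emph{does} follow from your Fourier formulas: solving $A_k=2\pi i\,\sgn(k)\,p_k$ and $B_k=-2\pi i\,\sgn(k)\,\tilde q_k+\tfrac{2\pi\lambda}{|k|}p_k$ for $(\tilde q_k,p_k)$ gives $p_k=-\tfrac{i\,\sgn(k)}{2\pi}A_k$ and $\tilde q_k=\tfrac{i\,\sgn(k)}{2\pi}B_k-\tfrac{\lambda}{2\pi|k|}A_k$, from which one obtains $\|(\tilde q,p)\|_{E_1}\le C(\lambda)\,\|\mathscr{L}_1(\tilde q,p)\|_{E_1}$ with an explicit $C(\lambda)$. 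So the honest fix is to abandon the sharp constant $2\pi$ and prove instead a lower bound with \emph{some} positive constant depending on $\lambda$; that is all Theorem~\ref{th:Szulkin} requires.
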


\begin{proof}
Let $\mathscr{L}_1(\tilde{q}, p) = \delta $ for $\delta = (\tilde{x}, y)$. Then we have 
$$
 \langle \delta, (\widetilde{Q}, P) \rangle_{E_1}=B_1((\tilde{q}, p), (\widetilde{Q}, P)),
$$
for every $(\widetilde{Q}, P) \in E_1$. If $(\widetilde{Q}, P) \in \widetilde C^{1}([0,\pi])\times C^{1}([0,\pi])$, then we have
\begin{multline}\label{eq:of:comparison}
\langle \tilde{x}, \widetilde{Q} \rangle_{\frac12} + \langle y, P \rangle_{\frac12}=
\int_0^\pi \langle p'(t), \widetilde{Q}(t) \rangle dt - \int_0^\pi \langle \tilde{q}'(t), P(t) \rangle dt\\ + 2 \lambda  \int_0^\pi \langle p(t),  P(t) \rangle dt.
\end{multline}
Using the notations
$$
 \tilde{q}(t) = \sum_{0 \ne k \in \Z } \tilde{q}_k e^{i 2k t}, \quad p(t) = \sum_{k \in \mathbb{Z}} p_k e^{i 2k t},
$$
$$
 \tilde{x}(t) = \sum_{0 \ne k \in \mathbb{Z}} \tilde{x}_k e^{i 2k t}, \quad y(t) = \sum_{ k \in \mathbb{Z}} y_k e^{i 2k t},
$$
$$
 \widetilde{Q}(t) = \sum_{0 \ne k \in \Z} \widetilde{Q}_k e^{i 2k t}, \quad  P(t) = \sum_{ k \in \mathbb{Z}} P_k e^{i 2k t},
$$
and putting $P = 0$ in \eqref{eq:of:comparison}, we have
$$
 \sum_{0 \neq k \in \Z}  |k| \langle \tilde{x}_k,  \widetilde{Q}_k \rangle=  2\pi \sum_{0 \neq k \in \Z}  ik \langle p_k, \widetilde{Q}_k \rangle\,,
$$
which implies that
\begin{equation}\label{eq:of:tilte:xk}
\tilde{x}_k =  \frac{2\pi ki}{|k|} p_k\,, \quad \hbox{ for } k\ne 0\,.    
\end{equation}
On the other hand, taking $\widetilde{Q} = 0$ in~\eqref{eq:of:comparison}, we obtain that
$$
\langle y_0, P_0 \rangle + \sum_{0 \neq k \in \Z}  |k| \langle y_k,  P_k \rangle=  2\pi \sum_{0 \neq k \in \Z}   \langle -ki\tilde{q}_k +  \lambda p_k, P_k \rangle+ 2 \lambda \pi \langle p_0, P_0 \rangle\,,
$$
which implies that
\begin{equation}\label{eq:of:yk}
\begin{cases}
    y_0 = 2 \lambda \pi p_0\\
    y_k =  2\pi\frac{- ki\tilde{q}_k +  \lambda p_k }{|k|}\,, \quad \hbox{ for } k\ne 0\,.
\end{cases}    
\end{equation}
Combining~\eqref{eq:of:tilte:xk} and~\eqref{eq:of:yk} together with the definition of norm on $E_1$, we have
\begin{align*}
\|\tilde{x}\|^2_{\frac12} +\|y\|^{2}_{\frac12}  &= 4 \pi^2 \lambda^2 |p_0|^2 +4 \pi^2 \sum_{0 \neq k \in \Z } \frac{(|k|^2+ \lambda^2)}{|k|} |p_k|^2 + 4 \pi^2\sum_{0 \neq k \in \Z } |k| |\tilde{q}_k|^2  \\ 
&\geq 4 \pi^2 \Big( |p_0|^2+ \sum_{0 \neq k \in \Z } |k| |p_k|^2 + \sum_{0 \neq k \in \Z } |k| |\tilde{q}_k|^2 \Big)\\
&= 4 \pi^2 \big(   \|\tilde{q}\|^2_{\frac12}+ \|p\|^2_{\frac12}\big) \,,
\end{align*}
which completes the proof.
 \end{proof}
As a consequence of~\eqref{eq:of:norm:of:l2} and Lemma~\ref{lem:of:L1:greater:than}, $\hbox{ker } \mathscr{L}=\{0\}$. Moreover, by a classical reasoning~\cite[Proposition 2.14]{FonOrt2023RCMPS}, we conclude that the operator $\mathscr{L}$ is bijective and its inverse $\mathscr{L}^{-1}:E \to E$ is continuous. 

\medbreak
By Theorem~\ref{th:Szulkin}, we conclude that the functional $\varphi$ has at least three critical points. The next result shows that these critical points correspond to solutions of the problem~\eqref{sys:main:modified:2}--\eqref{eq:of:npbc}.

\begin{pro}\label{pro:of:critical:points}
If $(w_0, \bar q_{0}, \bar u_{0})=(\tilde{q}_{0}, p_{0},  \tilde{u}_{0}, v_{0},\bar q_{0}, \bar u_{0} )$ is a critical point of the functional $\varphi$, then $z_0(t)=(\bar q_{0}+\tilde{q}_{0}(t), p_{0}(t),  \bar u_{0}+\tilde{u}_{0}(t), v_{0}(t) )$ is a solution of the problem~\eqref{sys:main:modified:2}--\eqref{eq:of:npbc}.  
\end{pro}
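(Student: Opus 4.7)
I would split the identity $d\varphi(w_0,\bar q_0,\bar u_0)=0$ coordinate by coordinate and read each component as a weak formulation of one equation of~\eqref{sys:main:modified:2}. Conceptually the argument decouples: on the pair $(\tilde q,p)$ it is the Hofer--Zehnder-type computation of~\cite{HofZeh1994Bir}, on the pair $(\tilde u,v)$ the Fonda--Ortega-type computation of~\cite{FonOrt2023RCMPS,FonMamObeSfe2024NODEA}; the only novelty is running them jointly on $E\times\mathbb{T}^{2}$.

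The partials in the $(\bar q,\bar u)$ directions only touch $\psi$ and immediately yield the mean-value identities
\begin{equation*}
\int_{0}^{\pi}\partial_{q}\widetilde H(t,z_{0}(t))\,dt=0,\qquad \int_{0}^{\pi}\partial_{u}\widetilde H(t,z_{0}(t))\,dt=0,
\end{equation*}
which will later fix the integration constants. The partial in $w$, combined with the definition of $\mathscr{L}$, gives
\begin{equation*}
\BB(w_{0},W)=-\int_{0}^{\pi}\!\Bigl[\partial_{q}\widetilde H\,\widetilde Q+(\partial_{p}\widetilde H-2\lambda p_{0})P+\partial_{u}\widetilde H\,\widetilde U+\partial_{v}\widetilde H\,V\Bigr]dt
\end{equation*}
for every $W=(\widetilde Q,P,\widetilde U,V)\in E$; by Proposition~\ref{pro:of:biliear:form} and the continuity of $d\psi$ it suffices to test this identity against $W\in D$.

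The central step is to rewrite $\BB(w_{0},W)$ in a form valid when $w_{0}\in E$ has only fractional regularity. Picking $w_{n}\in D$ with $w_{n}\to w_{0}$ in $E$ and integrating~\eqref{eq:of:bilinear} by parts in each of the four terms, every boundary contribution vanishes: the pairs $(\tilde q_{n},p_{n})$ and $(\widetilde Q,P)$ are $\pi$-periodic (from the $H_{\pi}^{1/2}$-structure), while $v_{n}$ and $V$ vanish at $0$ and $\pi$, so the mixed boundary products $\tilde u_{n}V$ and $v_{n}\widetilde U$ vanish as well. Passing to the limit in $L^{2}$ yields
\begin{equation*}
\BB(w_{0},W)=\int_{0}^{\pi}\bigl[-p_{0}\widetilde Q'+\tilde q_{0}P'+2\lambda p_{0}P-v_{0}\widetilde U'+\tilde u_{0}V'\bigr]dt.
\end{equation*}
Plugging this back into the critical point equation and testing one coordinate at a time, a du~Bois-Reymond-type argument gives, in turn, $\tilde q_{0}'=\partial_{p}\widetilde H$ and $p_{0}'=-\partial_{q}\widetilde H$ a.e.\ (from periodic test functions $P,\widetilde Q$, with the integration constants killed by the first mean-value identity and by the periodicity of $\tilde q_{0},p_{0}$ in $H_{\pi}^{1/2}$), $\tilde u_{0}'=\partial_{v}\widetilde H$ a.e.\ (from $V\in C_{0}^{1}$), and $v_{0}'=-\partial_{u}\widetilde H+c$ a.e.\ (from $\widetilde U\in\widetilde C^{1}$). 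Integrating the last over $[0,\pi]$ and using the second mean-value identity together with $v_{0}(0)=v_{0}(\pi)=0$ — automatic from the sine-series structure of $Y_{\beta}$ since $\beta>\tfrac{1}{2}$ — forces $c=0$. Setting $q_{0}=\bar q_{0}+\tilde q_{0}$ and $u_{0}=\bar u_{0}+\tilde u_{0}$, the four identities assemble into~\eqref{sys:main:modified:2}, while the periodicity of $(\tilde q_{0},p_{0})$ and the endpoint behaviour of $v_{0}$ give~\eqref{eq:of:npbc}.

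\textbf{Main obstacle.} The one genuinely delicate point is the integration-by-parts step for $\BB(w_{0},W)$ when $w_{0}$ only lies in $E$: one has to exploit simultaneously the periodicity built into $H_{\pi}^{1/2}$ and the endpoint behaviour inherited from $Y_{\beta}$ and $C_{0}^{1}$, so that within each of the four pairs at least one factor annihilates the boundary term. Once this weak formula is established, the translation into the pointwise ODE together with the correct mixed boundary conditions is routine.
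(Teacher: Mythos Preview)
Your proof is correct and follows the same strategy as the paper: test the critical-point identity $\BB(w_0,W)+d\psi(w_0,\bar q_0,\bar u_0)W=0$ coordinate by coordinate to recover each equation of~\eqref{sys:main:modified:2} in the distributional sense, bootstrap to $C^1$, and then read off~\eqref{eq:of:npbc} from the structure of $H^{1/2}_\pi$ and $Y_\beta$. The only difference is cosmetic: the paper keeps the derivatives on $w_0$ and treats $\int p_0'\,\widetilde Q$, $\int v_0'\,\widetilde U$, etc.\ directly as distributional pairings, whereas you first shift the derivatives onto $W$ via a density-and-integration-by-parts argument and then invoke du~Bois-Reymond, and you are more explicit about using the $(\bar q,\bar u)$-criticality to eliminate the constants coming from the mean-zero constraint on $\widetilde Q,\widetilde U$ (the paper sidesteps this by tacitly testing against general $C^1$ functions).
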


\begin{proof}
  Since $(w_0, \bar q_{0}, \bar u_{0}) \in E \times \R^2$ is a critical point of the functional $\varphi$, for any $(w, \bar q, \bar u) = (\tilde{q}, p,  \tilde{u}, v,\bar q, \bar u )\in E \times \R^2$, we have
\begin{equation}\label{eq:of:critical:point:of:phi}
    0= d\varphi(w_0, \bar q_{0}, \bar u_{0})(w, \bar q, \bar u) = \BB(w_0, w)+ d \psi (w_0, \bar q_{0}, \bar u_{0})(w, \bar q, \bar u)\,.
\end{equation}
Moreover, we have
\begin{align*}
    d & \psi (w_0, \bar q_{0}, \bar u_{0})(w, \bar q, \bar u) = \lim_{s \to 0} \frac{1}{s} \big( \psi(w_0 + sw, \bar q_{0}+ s\bar q, \bar u_{0} +s \bar u )- \psi( w_0, \bar q_{0}, \bar u_{0}) \big) \\
    = & \int_{0}^{\pi} \partial_{q} \widetilde{H}\big(t, z_0(t) \big) (\bar q + \tilde{q}(t))dt  + \int_{0}^{\pi} \big[ \partial_{p}\widetilde{H}\big(t, z_0(t) \big) - 2 \lambda p_0(t) \big] p(t)  dt\\
    &+ \int_{0}^{\pi} \partial_{u} \widetilde{H}\big(t, z_0(t) \big) (\bar u + \tilde{u}(t))dt+ \int_{0}^{\pi} \partial_{v} \widetilde{H}\big(t, z_0(t) \big) v(t)dt\,.
\end{align*}
If we consider $ \tilde{q} \in C^1([0, \pi])$ and choose $(w, \bar q, \bar u)=\big((\tilde{q},0, 0,0),0, 0 \big)$ in~\eqref{eq:of:critical:point:of:phi}, we obtain
$$
0= \int_{0}^{\pi}\big[ p_{0}'(t) \tilde{q}(t) +  \partial_{q} \widetilde{H}\big(t, z_0(t) \big) \tilde{q}(t) \big] dt\,,
$$
which implies that
$$
\int_{0}^{\pi} p_{0}'(t) \tilde{q}(t) dt =- \int_{0}^{\pi}  \partial_{q} \widetilde{H}\big(t, z_0(t) \big)\tilde{q}(t) dt\,.
$$
Therefore, in the sense of distributions, we have
$$
p_{0}'(t) = - \partial_{q} \widetilde{H}\big(t, z_0(t) \big)\,,
$$
which is exactly the second equation in~\eqref{sys:main:modified:2}. In particular, $p_0 \in W^{1,2}(0, \pi)$ and therefore it is continuous.
\medbreak

Now if choose $(w, \bar q, \bar u)=\big((0,p, 0,0),0, 0 \big)$ in~\eqref{eq:of:critical:point:of:phi}, we have
$$
0= \int_{0}^{\pi}\big[ - \tilde{q}_{0}'(t)p(t) + 2 \lambda p_0(t) p(t) + \big[ \partial_{p} \widetilde{H}\big(t, z_0(t) \big) -2 \lambda p_0(t) \big] p(t) \big] dt\,,
$$
which implies that
$$
\int_{0}^{\pi} \tilde{q}_{0}'(t) p(t) dt =\int_{0}^{\pi}  \partial_{p} \widetilde{H}\big(t, z_0(t) \big) p(t) dt\,.
$$
Therefore, in the sense of distributions, we have
$$
q_{0}'(t)=\tilde{q}_{0}'(t) = - \partial_{p} \widetilde{H}\big(t, z_0(t) \big)\,,
$$
which is the first equation in~\eqref{sys:main:modified:2}.

\medbreak

Similarly, if we choose $(w, \bar q, \bar u)=\big((0,0,\tilde{u},0),0, 0 \big)$ in~\eqref{eq:of:critical:point:of:phi}, we obtain
$$
0= \int_{0}^{\pi}\big[ v_{0}'(t) \tilde{u}(t) + \partial_{u} \widetilde{H}\big(t, z_0(t) \big) \tilde{u}(t) \big] dt\,,
$$
which implies that
$$
\int_{0}^{\pi} v_{0}'(t) \tilde{u}(t) dt =- \int_{0}^{\pi}  \partial_{u} \widetilde{H}\big(t, z_0(t) \big) \tilde{u}(t) dt\,,
$$
so that
$$
v_{0}'(t) =  - \partial_{u} \widetilde{H}\big(t, z_0(t) \big)\,,
$$
which is the fourth equation in system~\eqref{sys:main:modified:2}. By a similar reasoning, choosing $(w, \bar q, \bar u)=\big((0,0,0,v),0, 0 \big)$ we see that the function $u_0$ is continuous, and in the sense of distributions satisfy the third equation in system~\eqref{sys:main:modified:2}. By equations in~\eqref{sys:main:modified:2}, we deduce that $z_0(t)  \in [C^1([0, \pi])]^{4}$, and so the equations are satisfied in the classical sense. Since $ q_{0}, p_0 \in H^{\frac{1}{2}}_{\pi}(0, \pi) $ and $v_0 \in Y_{\beta}$, the boundary conditions~\eqref{eq:of:npbc} are also satisfied, and thus $z_0(t)$ is a solution of the problem~\eqref{sys:main:modified:2}--\eqref{eq:of:npbc}.
\end{proof}

\subsection{Back to the original system}

We complete the proof of Theorem~\ref{thm:main} by proving the following result.

\begin{pro}\label{pro:final:low}
    For $\lambda> \bar c$, where $\bar c$ is as in~\eqref{eq:of:bddness:of:hatH}, system~\eqref{sys:main:modified:2} does not have solutions $z(t)=(q(t),p(t),u(t),v(t))$ departing with $p(0) \notin\, ]-1,1[$ satisfying the boundary conditions~\eqref{eq:of:npbc}.
\end{pro}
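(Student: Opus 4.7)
The plan is to argue by contradiction: assume there is a solution $z(t)=(q,p,u,v)$ of~\eqref{sys:main:modified:2}--\eqref{eq:of:npbc} with $p(0)\notin\,]-1,1[$. The key observation is that $R_\lambda$ was constructed precisely so that, after reading the solution through the canonical transformation $z=\mathcal Z_t(\zeta)$, the perturbed system becomes completely integrable in the new variables.

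First I would pass to $\zeta(t)=\mathcal Z_t^{-1}(z(t))$. Since $\mathcal Z_t$ is the time-$t$ symplectic flow of $\widehat H$, the standard rule for transforming a Hamiltonian system under a time-dependent canonical transformation gives that $\zeta$ solves the autonomous Hamiltonian system with Hamiltonian $\widetilde H(t,\mathcal Z_t(\zeta))-\widehat H(t,\mathcal Z_t(\zeta))=R_\lambda(t,\mathcal Z_t(\zeta))=\mathscr R_\lambda(\zeta)=\lambda\gamma(|\eta|)$. Writing $\zeta=(\xi,\eta,\mu,\nu)$, Hamilton's equations reduce to
\[
\dot\xi=\lambda\,\gamma'(|\eta|)\,\mathrm{sgn}(\eta),\qquad \dot\eta=0,\qquad \dot\mu=0,\qquad \dot\nu=0.
\]
Using $\mathcal Z_0=\mathrm{id}$ and $v(0)=0$ I obtain $\zeta(0)=z(0)=(q(0),p(0),u(0),0)$; setting $\eta_0:=p(0)$, this yields $\eta\equiv\eta_0$, $\mu\equiv u(0)$, $\nu\equiv 0$ and $\xi(\pi)=q(0)+\pi\lambda\gamma'(|\eta_0|)\,\mathrm{sgn}(\eta_0)$. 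Going back through $z(\pi)=\mathcal Z_\pi(\zeta(\pi))$, the boundary condition $q(\pi)=q(0)$ reads
\[
\mathcal Q_\pi\bigl(\xi(\pi),\eta_0,u(0),0\bigr)-\xi(\pi)=-\pi\lambda\,\gamma'(|\eta_0|)\,\mathrm{sgn}(\eta_0).
\]

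I would then split cases on $\eta_0$. If $|\eta_0|\ge 1+\epsilon$, then \eqref{eq:of:gamma} gives $\gamma'(|\eta_0|)\ge 1$, so the right-hand side has modulus at least $\pi\lambda>\pi\bar c$, while \eqref{eq:of:appl:of:mean:val} forces the left-hand side to have modulus strictly below $\pi\bar c$, a contradiction. If $\eta_0\in[1,1+\epsilon]$, then $\gamma'(\eta_0)\ge 0$ makes the right-hand side non-positive, whereas the twist relation \eqref{eq:after:Asc:arz} forces the left-hand side to be strictly positive; the case $\eta_0\in[-1-\epsilon,-1]$ is completely symmetric. Together these cases exhaust $p(0)\notin\,]-1,1[$, yielding the result.

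The only genuinely non-routine point is the transformation identity showing that the $R_\lambda$-perturbation transfers, in $\zeta$-coordinates, to the autonomous Hamiltonian $\mathscr R_\lambda$. This follows from the symplecticity of $\mathcal Z_t$ combined with $\partial_t\mathcal Z_t=J\nabla_z\widehat H\circ\mathcal Z_t$, but it is the conceptual crux of the argument; once it is in place the remainder is an explicit integration and the clean case split above.
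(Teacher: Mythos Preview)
Your proof is correct and follows essentially the same approach as the paper: you pass to the transformed variable $\zeta=\mathcal Z_t^{-1}(z)$, use symplecticity of $\mathcal Z_t$ to see that $\zeta$ evolves under the autonomous Hamiltonian $\mathscr R_\lambda$, integrate explicitly, and derive a contradiction with $q(\pi)=q(0)$ by a case split on $|\eta_0|$. The only cosmetic difference is that the paper treats $|p(0)|\ge C+1$ as a separate third case (invoking property~(iv) so that $\mathcal Q_\pi$ is the identity there), whereas you absorb it into the range $|\eta_0|\ge 1+\epsilon$ via the universal bound~\eqref{eq:of:appl:of:mean:val}; your two-case split is a slight streamlining of the paper's three.
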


\begin{proof}
    We assume by contradiction that for such value of $\lambda$, there exists a solution $z =(q,p,u,v):[0, \pi] \to \R^4$ of system~\eqref{sys:main:modified:2} which starts with $p(0) \notin\, ]-1,1[$  and satisfies the boundary conditions~\eqref{eq:of:npbc}. We consider the function $\zeta:[0, \pi] \to \R^4 $ defined by
    $$
    \zeta(t) := \mathcal{Z}_{t}^{-1}(z(t))\,,
    $$
and claim that 
$$
J \dot{\zeta} = \nabla \mathscr{R}_{\lambda}(\zeta)\,.
$$
\noindent
{\sl Proof of the Claim.} Differentiating $z(t) = \mathcal{Z}_{t}(\zeta(t))$, we obtain
$$
\dot{z}= \frac{\partial \mathcal{Z} }{ \partial t} (t, \zeta) +  \frac{\partial \mathcal{Z} }{ \partial \zeta} (t, \zeta) \dot{\zeta}, 
$$
and so
\begin{equation*}
    \frac{\partial \mathcal{Z} }{ \partial \zeta} (t, \zeta) \dot{\zeta} =  \dot{z}- \frac{\partial \mathcal{Z} }{ \partial t} (t, \zeta)= -J \nabla_{z} \widetilde{H}(t,z) + J \widehat{H}(t,z)= -J \nabla R_{\lambda}(\zeta)\,.
\end{equation*}
By $(iii)$, since $\mathcal{Z}_t$ is canonical, we obtain
$$
\dot{\zeta}=-J \nabla \mathscr{R}_{\lambda}(\zeta)\,,  
$$
for details see \cite[page 9]{FonUre2016CRASP}.

\medbreak
By the claim and the definition of $\mathscr{R}_{\lambda}$, for $\zeta=(\xi, \eta, \mu, \nu)$ we have
$$
\dot{\xi}= \lambda \gamma'(|\eta|) \frac{\eta}{|\eta|}, \quad \dot{\eta}= \dot{\mu}= \dot{\nu}=0\,,
$$
and consequently, by using the fact that $\mathcal{Z}_0$ is identity, we obtain
$$
\eta(t)=\eta(0)=p(0), \quad \nu(t)=\nu(0)=0, \quad \mu(t)= \mu(0)=u(0)\,,
$$
for every $t \in [0, \pi]$, and
$$
\xi(t)= q(0)+ \lambda t \gamma'(|p(0)|) \frac{p(0)}{|p(0)|} \,.
$$
Thus we have
\begin{equation}\label{eq:of:q:of:pi}
q(\pi)= \mathcal{Q}_{\pi}\Big(q(0)+ \lambda \pi\gamma'(|p(0)|) \frac{p(0)}{|p(0)|},\; p(0), u(0), 0 \Big)\,. 
\end{equation}

In order to get a contradiction, we show that $q(\pi) \neq q(0)$, and for this we discuss the following cases.

\medbreak

\ni{\it Case 1.} If $1 \leq |p(0)| < 1+ \epsilon$, then by~\eqref{eq:of:gamma} $\gamma'(|p(0)|) \geq 0$, so by~\eqref{eq:after:Asc:arz} and~\eqref{eq:of:q:of:pi} we have
$$
q(\pi)- q(0)   < - \lambda \pi \gamma'(|p(0)|)   \qquad \hbox{ for } p(0) \in [-1-\epsilon, -1]\,,
$$
and
$$
q(\pi)- q(0)   >  \lambda \pi \gamma'(|p(0)|) \qquad \hbox{ for } p(0) \in [1, 1+\epsilon]\,,
$$
which imply that $q(\pi) \ne q(0)$ in this case.

\medbreak
\ni{\it Case 2.} If $1+ \epsilon \leq |p(0)| < C+1$, then since $\gamma'(|p(0)|) \geq 1$ by~\eqref{eq:of:gamma}, by triangular inequality, we have
$$
|q(\pi)-q(0)| \geq \lambda \pi \gamma'(|p(0)|)- \left|q(\pi)-q(0)+ \lambda \pi \gamma'(|p(0)|) \frac{p(0)}{|p(0)|}\right|,
$$
which by~\eqref{eq:of:appl:of:mean:val} and~\eqref{eq:of:q:of:pi} imply that
$$
|q(\pi)-q(0)|    > \lambda \pi - \bar{c} \pi >0\,,
$$
since $\lambda > \bar{c}$.
\medbreak
\ni{\it Case 3.} If $|p(0)| \geq C+1$, then by~\eqref{eq:of:gamma}, $\gamma'(|p(0)|)= 2 |p(0)|$, and (iv) implies that the map $\mathcal{Q}_{\pi}( \xi, \eta, \mu, \nu)= \xi$. Thus~\eqref{eq:of:q:of:pi} implies that 
$$
q(\pi)= q(0)+2\lambda \pi p(0)\,,
$$
so that $q(\pi) \ne q(0)$ in this case too.
\medbreak
Thus we proved that for $\lambda> \bar c$, system~\eqref{sys:main:modified:2} does not have solutions $z(t)=(q(t),p(t),u(t),v(t))$ departing with $p(0) \notin\, ]-1,1[$ satisfying the boundary conditions~\eqref{eq:of:npbc}.
\end{proof}
\medbreak

By the above property $(c)$, we conclude that problem~\eqref{sys:main}--\eqref{eq:of:npbc} has at least three geometrically distinct solutions, and this completes the proof of the main theorem in the case of low dimension. \qed

\section{Proof of Theorem~\ref{thm:main} in higher dimension}\label{sec:3}
Following the idea of Section~\ref{sec:2}, we first modify the Hamiltonian function by using a cut-off function $\sigma$ defined as~\eqref{eq:of:sigma:function} and set
\begin{equation*}
\widehat{H}(t,q,p,u,v)= \sigma(|p|)\sigma(|v|) H(t,q,p,u,v)\,,
\end{equation*}
so that the gradient of $\widehat{H}(t,q,p,u,v)$ with respect to $q,p,u$  and $v$ are bounded. In particular there exist $\ov{C}>0$ such that
\begin{equation}\label{eq:of:bddness:of:nabla:h}
    \big| \nabla_p \widehat{H} \big| <  \ov{C}\,.
\end{equation}
Without loss of generality, we may take $\mathcal{D}=[-1,1]^M$, and for $\zeta=(\xi, \eta, \mu, \nu) \in \R^{2M+2L}$, we denote $\mathcal{Z}(t, \zeta)$ the value of the solution $z=(q,p,u,v)$ of the modified system with $z(0)= \zeta$. Based on our assumptions, the assertions $(i)$--$(iv)$ hold true in this case too. Proceeding similarly as in Section~\ref{sec:2}, we choose a function like $\gamma$ given in~\eqref{eq:of:gamma}, and define $ \mathscr{R}_\lambda: \R^{2M+2L} \to \R$ by 
$$
\mathscr{R}_\lambda(\xi, \eta, \mu, \nu):= - \lambda \gamma(|\eta|)\,,
$$
and the function $ R_\lambda: \R \times \R^{2M+2L} \to \R$ by
$$
R_{\lambda}(t, \cdot)= \mathscr{R}_\lambda \circ \mathcal{Z}_t^{-1}, \quad \hbox{ if } 0 \leq t \leq \pi\,.
$$
Now set 
$$
\widetilde{H}(t, z)= \widehat{H}(t,z) + R_{\lambda}(t,z)\,,
$$
and consider the modified system
\begin{equation}\label{sys:main:modified:2:higher}
\begin{cases}
\dot{q}= \nabla_{p} \widetilde{H}(t,q,p,u,v)\, ,\\
\dot{p}= -\nabla_{q} \widetilde{H}(t,q,p,u,v)\,, \vspace{1mm}\\
\dot{u}= \nabla_v \widetilde{H}(t,q,p,u,v) \,,\\
\dot{v} = - \nabla_u \widetilde{H}(t,q,p,u,v) \,.
\end{cases}
\end{equation}
The function $\widetilde{H} : [0, \pi] \times \R^{2M+2L} \to \R$ satisfies the properties $(a)-(c)$ with $ \eta \in \mathcal{D}$ in this case.

\medbreak

We consider that functions $\tilde{q}$ and $p$ belong to the spaces $\Big(\widetilde{H}_{\pi}^{\frac12}\Big)^{M}$ and $\Big(H_{\pi}^{\frac12}\Big)^{M}$ respectively, while functions $\tilde u$ and $v$ belong to the spaces $X_{\alpha}^{L}$ and $Y_{\beta}^{L}$ respectively. In addition, we consider the torus 
$$
\mathbb{T}^{M+L}= (\R/\kappa_1\mathbb{Z}) \times \dots \times (\R/\kappa_M\mathbb{Z}) \times (\R/\tau_1\mathbb{Z}) \times \dots \times (\R/\tau_L\mathbb{Z})\,.
$$
Choose the space
$$
D= [\widetilde C^{1}([0,\pi])]^M \times [C^{1}([0,\pi])]^M \times [\widetilde C^1([0,\pi])]^L\times [C^1_0([0,\pi])]^L \,,
$$
and define a bilinear form $\BB : D \times D \to \R$ as in~\eqref{eq:of:bilinear}.

\medbreak
At this stage, we can prove similar results for the higher dimension as Proposition~\ref{pro:of:biliear:form}, Lemma~\ref{lem:of:L1:greater:than}, and Proposition~\ref{pro:of:critical:points} which will assure by Theorem~\ref{th:Szulkin} that problem~\eqref{sys:main:modified:2:higher}--\eqref{eq:of:npbc} has at least $M+L+1$ geometrically distinct solutions. We now need the following result to complete the proof in the higher dimension. The proof is very similar to the proof of Proposition~\ref{pro:final:low}, so we omit it for briefness.

\begin{pro}
    For $\lambda> \ov{C}$, where $\ov{C}$ is as in~\eqref{eq:of:bddness:of:nabla:h}, system~\eqref{sys:main:modified:2:higher} does not have any solution $z(t)=(q(t),p(t),u(t),v(t))$ departing with $p(0) \in \mathring{\mathcal{D}} $ satisfying the boundary conditions~\eqref{eq:of:npbc}.
\end{pro}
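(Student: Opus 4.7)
The statement is the direct higher-dimensional counterpart of Proposition~\ref{pro:final:low}: for $\lambda>\overline{C}$, system~\eqref{sys:main:modified:2:higher} admits no solution of~\eqref{eq:of:npbc} with $p(0)\notin\mathring{\mathcal{D}}$. As printed, the membership symbol appears to be inverted with respect to the low-dimensional model case (compare Proposition~\ref{pro:final:low}, whose hypothesis is $p(0)\notin\,]-1,1[\;=\mathring{\mathcal{D}}$ for $M=1$); the role of this proposition is precisely to force each of the $M+L+1$ critical points of $\varphi$ supplied by Theorem~\ref{th:Szulkin} to have $p(0)\in\mathring{\mathcal{D}}$, so that property~$(c)$ transports the solution back to a solution of the original system~\eqref{sys:main}--\eqref{eq:of:npbc}. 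The plan is to reproduce the three-case contradiction of Proposition~\ref{pro:final:low} one coordinate at a time.

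Assume for contradiction that $z=(q,p,u,v)$ solves \eqref{sys:main:modified:2:higher}--\eqref{eq:of:npbc} with $p(0)\notin\mathring{\mathcal{D}}=(-1,1)^M$, and fix an index $i_0\in\{1,\dots,M\}$ with $|p_{i_0}(0)|\ge 1$. Set $\zeta(t):=\mathcal{Z}_t^{-1}(z(t))$; differentiating the identity $z(t)=\mathcal{Z}_t(\zeta(t))$, using that $\mathcal{Z}_t$ is the Hamiltonian flow of $\widehat H$ while $\dot z$ is that of $\widetilde H=\widehat H+R_\lambda$, and that $\mathcal{Z}_t$ is canonical (property~$(iii)$), one obtains $J\dot\zeta=\nabla\mathscr{R}_\lambda(\zeta)$ as in the low-dimensional case. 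Since $\mathscr{R}_\lambda$ depends only on $|\eta|$, the components $\eta,\mu,\nu$ are conserved along $\zeta$ and, using $\mathcal{Z}_0=\mathrm{id}$, satisfy $\eta\equiv p(0)$, $\mu\equiv u(0)$, $\nu\equiv 0$, while $\xi(t)=q(0)+\lambda t\,\gamma'(|p(0)|)\,p(0)/|p(0)|$ (up to the overall sign determined by the definitions of $\mathscr{R}_\lambda$ and of $\sigma_{i_0}$, which are chosen compatibly). Applying $\mathcal{Z}_\pi$ yields
\[
q(\pi)=\mathcal{Q}_\pi\!\Big(q(0)+\lambda\pi\gamma'(|p(0)|)\tfrac{p(0)}{|p(0)|},\;p(0),\;u(0),\;0\Big),
\]
and periodicity requires $q_{i_0}(\pi)=q_{i_0}(0)$, which will be contradicted.

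Split according to the size of $|p(0)|$, reading conclusions off the $i_0$-th coordinate. If $1\le|p(0)|<1+\varepsilon$, then $\gamma'(|p(0)|)\ge 0$ and the higher-dimensional analogue of~\eqref{eq:after:Asc:arz}, obtained from $A2$ on the face $p_{i_0}=\pm 1$, gives $\sigma_{i_0}\bigl(q_{i_0}(\pi)-\xi_{i_0}(\pi)\bigr)>0$; combined with the fact that $\xi_{i_0}(\pi)-q_{i_0}(0)$ has the sign of $\sigma_{i_0}p_{i_0}(0)$ (or is zero), this rules out $q_{i_0}(\pi)=q_{i_0}(0)$. If $1+\varepsilon\le|p(0)|<C+1$, then $\gamma'(|p(0)|)\ge 1$, and combining the mean-value bound $|\mathcal{Q}_\pi(\xi,\eta,\mu,\nu)-\xi|<\overline{C}\pi$ — which is the higher-dimensional version of~\eqref{eq:of:appl:of:mean:val} supplied by~\eqref{eq:of:bddness:of:nabla:h} — with the hypothesis $\lambda>\overline{C}$ gives $|q_{i_0}(\pi)-q_{i_0}(0)|>(\lambda-\overline{C})\pi>0$. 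Finally, if $|p(0)|\ge C+1$, property~$(iv)$ forces $\mathcal{Q}_\pi$ to be the identity and $\gamma'(|p(0)|)=2|p(0)|\ne 0$, so $q_{i_0}(\pi)-q_{i_0}(0)=\pm 2\lambda\pi\,p_{i_0}(0)\ne 0$.

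The one genuine subtlety compared with $M=1$ is that $\mathscr{R}_\lambda$ is built from the Euclidean norm $|\eta|$ while the twist assumption $A2$ is prescribed face-by-face on the rectangle $\mathcal{D}$, so one must check that the single offending coordinate $i_0$ carries enough information to run each of the three cases; because $\xi-q(0)$ is proportional to $p(0)/|p(0)|$, its $i_0$-th component has the required sign to pair with the $\sigma_{i_0}$-twist in Case~1, while the scalar bounds of Cases~2 and~3 transfer coordinatewise without change. An equivalent and arguably cleaner choice — bypassing this subtlety altogether — is to redefine $\mathscr{R}_\lambda(\xi,\eta,\mu,\nu):=-\lambda\sum_{i=1}^M\gamma(|\eta_i|)$, which decouples the reduced dynamics componentwise and makes each case literally one-dimensional, reducing the verification to $M$ independent copies of Proposition~\ref{pro:final:low}.
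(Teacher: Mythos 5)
Your proposal is correct and follows precisely the route the paper intends: the paper omits this proof, stating it is ``very similar'' to that of Proposition~\ref{pro:final:low}, and you reproduce that three-case contradiction argument, correctly flagging along the way that the statement's ``$p(0)\in\mathring{\mathcal D}$'' must be read as ``$p(0)\notin\mathring{\mathcal D}$'' (compare Proposition~\ref{pro:final:low} and the role this result plays in concluding Theorem~\ref{thm:main}) and that the sign of $\mathscr{R}_\lambda$ must be matched to $\sigma$. One small imprecision: in Case~2, with $\mathscr{R}_\lambda$ built from the Euclidean norm, the $i_0$-th component of $\xi(\pi)-q(0)$ equals $\lambda\pi\gamma'(|p(0)|)\,|p_{i_0}(0)|/|p(0)|$, which need not exceed $\lambda\pi$, so the coordinatewise bound $|q_{i_0}(\pi)-q_{i_0}(0)|>(\lambda-\overline{C})\pi$ does not ``transfer without change''; however, the vector estimate $|q(\pi)-q(0)|\ge|\xi(\pi)-q(0)|-|\mathcal{Q}_\pi(\zeta(\pi))-\xi(\pi)|>(\lambda-\overline{C})\pi$ already contradicts $q(\pi)=q(0)$, so the argument survives. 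Your closing observation --- that replacing $\gamma(|\eta|)$ by a componentwise sum (with signs $\sigma_i$, or after reducing to $\sigma\equiv 1$ by flipping coordinates) decouples the reduced dynamics and makes each case literally one-dimensional --- is a clean way to remove this friction between the Euclidean cut-off and the face-by-face twist condition.
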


\section{Examples of applications and final remarks }\label{sec:4}

As an example, in the case $M=L=1$, consider the problem
\begin{equation}\label{system:for:the:example}
\begin{cases}
\ddot{q}+A \sin q  =e(t) +\partial_{q} P(t,q,u)\,,\\
\ddot{u} = h(u) + \partial_{u} P(t,q,u)\,,\\
q(a)=q(b), \quad p(a) = p(b),\\
            u'(a)=0=u'(b)\,.
\end{cases}
\end{equation}
where the constant $A$ is positive, the function $h:\R \to \R$ is continuous and $P:[a,b] \times \R^2 \to \R$ is continuously differentiable, $2 \pi$-periodic in $q$ and $\tau$-periodic in $u$. We assume that the function $h$ is also $\tau$-periodic with $\int_{0}^{\tau} h(s) ds =0$. Assuming
$$
\int_a^b e(t)\,dt=0\,,
$$
and setting $E(t)=\int_a^te(s)\,ds$, system in problem~\eqref{system:for:the:example} is equivalent to
\begin{equation*}
\begin{cases}
\dot{q}= p+E(t)\,, \quad
\dot{p} = -A \sin q  + \partial_{q} P(t,q,u)\,,\\
\dot{u} = v\,,\quad
\dot{v} = h(u) + \partial_{u} P(t,q,u)\,.
\end{cases}
\end{equation*}

\medbreak
It has been proved in~\cite[Section 3]{FonUll2024DIE}, that the twist condition $A2$ holds true. As an example for $h$ in the second equation of~\eqref{system:for:the:example}, we can take $h(u)=-\sin u$ or the {\em saw-tooth} function $h(u)= \arcsin{(\sin u)}$ (here $\tau=2 \pi$). We can see that all the assumptions of Theorem~\ref{thm:main} are satisfied, therefore there are  {\em at least three} geometrically distinct solutions of problem~\eqref{system:for:the:example}.

\medbreak
It is noted that the above example generalizes a classical theorem in~\cite{MawWil1984JDE} by Mawhin and Willem on the multiplicity of periodic solutions for the pendulum equation.

\medbreak

We now consider some variants of Theorem~\ref{thm:main}. 
To this aim, we first recall some definitions.
A closed convex bounded subset $\mathcal{D}$ of $\mathbb{R}^M$ having nonempty interior $\mathring {\mathcal{D}}$ is said to be a convex body of $\mathbb{R}^{M}$. If we assume that $\mathcal{D}$ has a smooth boundary, then we denote the unit outward normal at $\xi \in \partial \mathcal{D}$ by $\nu_{\mathcal{D}}(\xi)$. Moreover, we say that $\mathcal{D}$ is strongly convex if for any $r \in \partial \mathcal{D}$, the map $\mathcal{F}: \mathcal{D} \to\R$ defined by $\mathcal{F}(\xi)= \langle \xi-r , \nu_{\mathcal{D}}(r)\rangle$ has a unique maximum point at $\xi=r$.

\medbreak

We first state the following ``avoiding rays'' assumption studied in \cite{FonGarSfe2023JMAA, FonUll2024JDE, FonUll2024DIE} as follows.

\medbreak

\noindent $A2'$. There exists a convex body $\mathcal{D}$ of $\mathbb{R}^{M}$, having a smooth boundary, such that for $\sigma \in \{-1,1\}$ and for every $C^1$-function $\mathcal{W}:[a,b] \to \R^{2L}$, all the solutions $(q,p)$ of system~\eqref{eq:of:hamiltonian:unperterbed:for:higher}
starting with $p(a)\in{\cal D}$ are defined on $[a,b]$, and 
$$
p(a) \in \partial{\mathcal{D}}\quad\Rightarrow\quad  
q(b)-q(a) \notin \{ \sigma\lambda\, \nu_{\mathcal{D}}(p(a)):\lambda\ge0 \}\,. 
$$

\medbreak

Let us now state the following ``indefinite twist'' assumption which has been studied in \cite{FonGarSfe2023JMAA, FonUll2024JDE, FonUll2024DIE}.

\medbreak

\ni $A2''$. There are a strongly convex body ${\cal D}$ of $\R^M$ having a smooth boundary and a symmetric regular $M \times M$ matrix $\mathbb{A}$ such that for every $C^1$-function $\mathcal{W}:[a,b] \to \R^{2L}$, all the solutions $(q,p)$ of system~\eqref{eq:of:hamiltonian:unperterbed:for:higher}
starting with $p(a)\in{\cal D}$ are defined on $[a,b]$, and 
$$
p(a) \in \partial{\mathcal{D}}\quad\Rightarrow\quad    
\langle q(b)-q(a)\,,\, \mathbb{A} \nu_{\mathcal{D}}(p(a)) \rangle > 0\,.
$$

\begin{Thm}
If in the statement of Theorem~\ref{thm:main} we replace assumption $A2$ by $A2'$ or by $A2''$, the same conclusion holds.
\end{Thm}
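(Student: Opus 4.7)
The whole machinery developed in Sections~\ref{sec:2} and~\ref{sec:3} --- the cut-off of $H$, the Hilbert spaces, the bilinear form $\BB$ and invertibility of $\mathscr{L}$, the correspondence between critical points of $\varphi$ and classical solutions via Szulkin's Theorem~\ref{th:Szulkin}, and the transfer from the modified system back to the original one via property $(c)$ --- nowhere uses the specific rectangular shape of $\mathcal{D}$ or the particular form of the twist in $A2$. It only uses that one can build a penalization $\mathscr{R}_\lambda(\xi,\eta,\mu,\nu)$, depending only on $\eta$, whose gradient has a prescribed outward direction on $\partial \mathcal{D}$ and grows quadratically at infinity, together with the final no-solution lemma (Proposition~\ref{pro:final:low}) that rules out solutions of the modified system starting at $p(0)\notin\mathring{\mathcal{D}}$. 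So the plan is to keep everything else untouched and replace only $\mathscr{R}_\lambda$ and the final contradiction argument, adapted to the new geometry of $\mathcal{D}$.

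For the replacement by $A2'$, since $\mathcal{D}$ is a convex body with smooth boundary, fix a smooth function $\Gamma\colon\R^M\to[0,\infty[$ such that $\Gamma\equiv 0$ on $\mathcal{D}$, $\Gamma>0$ outside $\mathcal{D}$, $\Gamma(\eta)=|\eta|^2$ for $|\eta|$ large, and in a collar neighborhood of $\partial \mathcal{D}$
\[
\nabla \Gamma(\eta)=g(\eta)\,\nu_{\mathcal{D}}(\pi_{\mathcal{D}}(\eta))\,,\qquad g(\eta)>0\,,
\]
where $\pi_{\mathcal{D}}$ is the nearest-point projection; such a $\Gamma$ is obtained by composing the signed distance to $\mathcal{D}$ with a suitable smooth profile. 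Define $\mathscr{R}_\lambda(\xi,\eta,\mu,\nu):=\sigma\lambda\,\Gamma(\eta)$. Running through the proof of Proposition~\ref{pro:final:low} we again obtain $\dot\eta=\dot\mu=\dot\nu=0$ and $\dot\xi=\sigma\lambda\,\nabla\Gamma(\eta)$, hence
\[
q(\pi)=\mathcal{Q}_{\pi}\bigl(q(0)+\sigma\lambda\pi\,\nabla\Gamma(p(0)),\;p(0),\;u(0),\;0\bigr)\,.
\]
If $p(0)\in\partial\mathcal{D}$, then $\nabla\Gamma(p(0))=g(p(0))\nu_{\mathcal{D}}(p(0))$ and for $\lambda$ large the bounded error $\mathcal{Q}_\pi-\mathrm{id}$ cannot bring $q(\pi)-q(0)$ off the ray $\{\sigma s\,\nu_{\mathcal{D}}(p(0)):s\geq 0\}$, contradicting $A2'$; a uniform continuity/Ascoli argument analogous to~\eqref{eq:after:Asc:arz} extends the contradiction to a collar $\{0<\Gamma(p(0))<\epsilon\}$ around $\partial\mathcal{D}$; and for $|p(0)|\geq C+1$ property $(iv)$ reduces us to Case~3 of Proposition~\ref{pro:final:low}.

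For the replacement by $A2''$, one follows the same scheme but tailors $\Gamma$ to the matrix $\mathbb{A}$. Since $\mathcal{D}$ is strongly convex and $\partial\mathcal{D}$ is smooth, the Gauss map $\nu_\mathcal{D}\colon\partial\mathcal{D}\to S^{M-1}$ is a diffeomorphism, so we can choose a smooth $\Gamma$ satisfying the same growth and vanishing properties as above but with $\nabla\Gamma(\eta)=g(\eta)\,\mathbb{A}\,\nu_\mathcal{D}(\pi_\mathcal{D}(\eta))$ on a collar of $\partial\mathcal{D}$ (the existence of such a $\Gamma$ follows, e.g., by writing it through the support function of $\mathcal{D}$ and exploiting the symmetry of $\mathbb{A}$ to guarantee that the desired vector field is a gradient up to a positive rescaling). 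Taking $\mathscr{R}_\lambda(\xi,\eta,\mu,\nu):=-\lambda\,\Gamma(\eta)$ and repeating the calculation gives
\[
\bigl\langle q(\pi)-q(0),\,\mathbb{A}\,\nu_\mathcal{D}(p(0))\bigr\rangle=-\lambda\pi\, g(p(0))\,\bigl|\mathbb{A}\,\nu_\mathcal{D}(p(0))\bigr|^{2}+O(1)\,,
\]
which is negative for $\lambda$ large, contradicting $A2''$ when $p(0)\in\partial\mathcal{D}$, with the same extension to a neighborhood and the same treatment at infinity.

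The main obstacle is the construction of the smooth penalizations $\Gamma$ realizing the prescribed gradient direction on $\partial\mathcal{D}$ while matching $|\eta|^2$ at infinity, and the verification that the resulting ``forced shift'' $\sigma\lambda\pi\,\nabla\Gamma(p(0))$ dominates the uniformly bounded perturbation coming from $\mathcal{Q}_\pi$ throughout a full collar of $\partial \mathcal{D}$; in the rectangle case of $A2$ both ingredients were immediate because $\gamma(|\eta|)$ is coordinatewise and $\nu_\mathcal{D}$ is piecewise constant. Once these penalizations are in place, the functional $\varphi$ is defined and analyzed exactly as in Sections~\ref{sec:2}--\ref{sec:3}, Theorem~\ref{th:Szulkin} produces at least $M+L+1$ critical points, Proposition~\ref{pro:of:critical:points} turns them into solutions of the modified system, and the modified no-solution property transfers them via $(c)$ into geometrically distinct solutions of~\eqref{sys:main}--\eqref{eq:of:npbc} with $p(a)\in\mathring{\mathcal{D}}$.
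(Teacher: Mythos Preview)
The paper does not actually prove this theorem; it is stated without proof, the reader being implicitly referred to \cite{FonGarSfe2023JMAA, FonUll2024JDE, FonUll2024DIE} where the penalizations for $A2'$ and $A2''$ are constructed in the purely periodic setting. Your overall strategy --- keep the Hilbert space setup, $\BB$, $\mathscr{L}$, Szulkin's theorem and Proposition~\ref{pro:of:critical:points} untouched, and replace only $\mathscr{R}_\lambda$ and the no-solution argument --- is exactly what those references do, and it is the correct plan.

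For $A2'$ your sketch is on the right track but the logic is tangled. First, if $\Gamma\equiv 0$ on the closed set $\mathcal{D}$ and $\Gamma$ is smooth, then necessarily $\nabla\Gamma=0$ on $\partial\mathcal{D}$, so $g$ cannot be strictly positive there; the correct picture is $g=0$ on $\mathcal{D}$ and $g>0$ strictly outside, with $A2'$ itself handling $\partial\mathcal{D}$, an Ascoli--Arzel\`a extension handling a thin collar, and domination by the shift handling the rest. Second, your sign is wrong: with $\mathscr{R}_\lambda=\sigma\lambda\Gamma$ a periodic solution would force the $\widehat H$-displacement to equal $-\sigma\lambda\pi g\,\nu_\mathcal{D}$, which lies on the ray $\{\sigma s\,\nu_\mathcal{D}:s\le 0\}$ and is \emph{not} excluded by $A2'$; you need $\mathscr{R}_\lambda=-\sigma\lambda\Gamma$ so that periodicity forces the displacement onto the forbidden ray $\{\sigma s\,\nu_\mathcal{D}:s\ge 0\}$.

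For $A2''$ there is a genuine gap. A smooth $\Gamma$ with $\nabla\Gamma(\eta)=g(\eta)\,\mathbb{A}\,\nu_\mathcal{D}(\pi_\mathcal{D}(\eta))$, $g>0$ on a collar, $\Gamma\equiv 0$ on $\mathcal{D}$ and $\Gamma(\eta)=|\eta|^2$ at infinity does \emph{not} exist in general when $\mathbb{A}$ is indefinite, and the symmetry of $\mathbb{A}$ does not help. Already for $M=2$, $\mathcal{D}$ the unit disk and $\mathbb{A}=\mathrm{diag}(1,-1)$, at the boundary point $(0,1)$ the vector $\mathbb{A}\nu_\mathcal{D}=(0,-1)$ points \emph{into} $\mathcal{D}$, which is incompatible with $\Gamma$ vanishing on $\mathcal{D}$ and being positive just outside. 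Equivalently, the level sets of your $\Gamma$ would have to be closed curves orthogonal to $\mathbb{A}\nu_\mathcal{D}$, but the integral curves of the orthogonal field are hyperbolas, not closed. The references treat $A2''$ by a different device (a symplectic change of variables adapted to $\mathbb{A}$, reducing to an $A2'$-type situation), and that is what you must import here rather than trying to realize $\mathbb{A}\nu_\mathcal{D}$ as a gradient direction.
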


\medbreak
Let us now mention some possible further developments and open problems in this direction.
\medbreak

\noindent
{\sl 1.} The coupling of a Hamiltonian system having twist condition with a system having lower and upper solutions under periodic boundary conditions was examined in \cite{FonGarSfe2023JMAA, FonUll2024JDE}. In contrast, coupled Hamiltonian systems with a system having lower and upper solutions under Neumann-type boundary conditions was studied in \cite{FonMamObeSfe2024NODEA, Mam2024TMNA}. It would be worth to explore mixed boundary conditions, such as those in \eqref{eq:of:npbc}, for this type of coupled Hamiltonian systems.

 \medbreak
 
\noindent
{\sl 2.}  In \cite{FonUll2024NODEA}, the authors examined the coupling of a Hamiltonian system with a positive, positively-$(p,q)$-homogeneous Hamiltonian system associated with a Neumann-type boundary conditions. They also discussed the periodic problem, assuming a {\em twist condition} for the first system. It would be intriguing to study a coupled Hamiltonian system where the first system with a {\em twist assumption} satisfies a periodic conditions, and the second system is a positively-$(p,q)$-homogeneous Hamiltonian system with a Neumann-type boundary conditions and some {\em nonresonance} condition.

 \medbreak

\noindent
{\sl 3.} Fonda and Gidoni~\cite{FonGid2020NODEA} extended the higher-dimensional Poincar\'e--Birkhoff Theorem for the periodic problem associated with Hamiltonian systems coupling twisting components with nonresonant linear components, where they used a symmetric matrix for the second system. On the other hand, the similar Hamiltonian system with Neumann-type boundary conditions was treated in \cite{FonUll2024NODEA} for a special type of symmetric matrix. The case of using general symmetric matrix in \cite{FonUll2024NODEA} is still open. An interesting open problem arises for such systems by considering periodic boundary conditions for the first system, while Neumann-type boundary conditions for the second one with general symmetric matrix.

\medbreak

\noindent
{\sl 4.} In \cite{FonSfeToa2024Pre}, the authors proved a multiplicity result for the periodic problem associated with a Hamiltonian system whose Hamiltonian function has a twisting part and a nonresonant part. They also analyzed a possible approach to resonance together with some kind of Landesman--Lazer conditions. One could try to extend their results by taking Neumann-type boundary conditions for the second system.

\medbreak

\noindent
{\sl 5.} Multiplicity results were obtained in \cite{MamUll2024Pre} for the periodic problem associated with a Hamiltonian system coupling a system having a Poincar\'e--Birkhoff twist-type structure with another planar system whose nonlinearity lies between the gradients of two positive and positively $2$-homogeneous Hamiltonain functions. Nonresonance, simple resonance and  double resonance situations were treated by imposing some kind of Landesman--Lazer conditions at both sides.
It would be interesting to know if parallel results can be proved in the setting of this paper.

\bigbreak

\ni {\bf Acknowledgement}. The research contained in this paper was carried out within the framework of DEG1 - Differential Equations Group Of North-East of Italy. I am deeply grateful to Professor Alessandro Fonda and Professor Andrea Sfecci for generously allowing me to explore this research problem. Their insightful guidance  and valuable discussions significantly contributed to improving the results of this manuscript.



\vspace{0.5cm}
\footnotesize
\noindent Author's address:

\bigbreak

\begin{tabular}{l}
Wahid Ullah\\
Dipartimento di Matematica, Informatica e Geoscienze\\
Universit\`a degli Studi di Trieste\\
P.le Europa 1, 34127 Trieste, Italy\\
e-mail: wahid.ullah@phd.units.it
\end{tabular}

\bigbreak

\noindent Mathematics Subject Classification: 34C25.

\medbreak

\noindent Keywords: Hamiltonian systems; periodic conditions; Neumann boundary conditions;  Poincar\'e--Birkhoff Theorem.

\end{document}